\newtheorem{proposition}{Proposition}[section]
\newtheorem{theorem}[proposition]{Theorem}
\newtheorem{lemma}[proposition]{Lemma}
\theoremstyle{definition}
\newtheorem{definition}[proposition]{Definition}
\newtheorem{remark}[proposition]{Remark}
\numberwithin{equation}{section}
\def\soglia{\begin{figure}[htb]\begin{center}
\includegraphics[width=7cm]{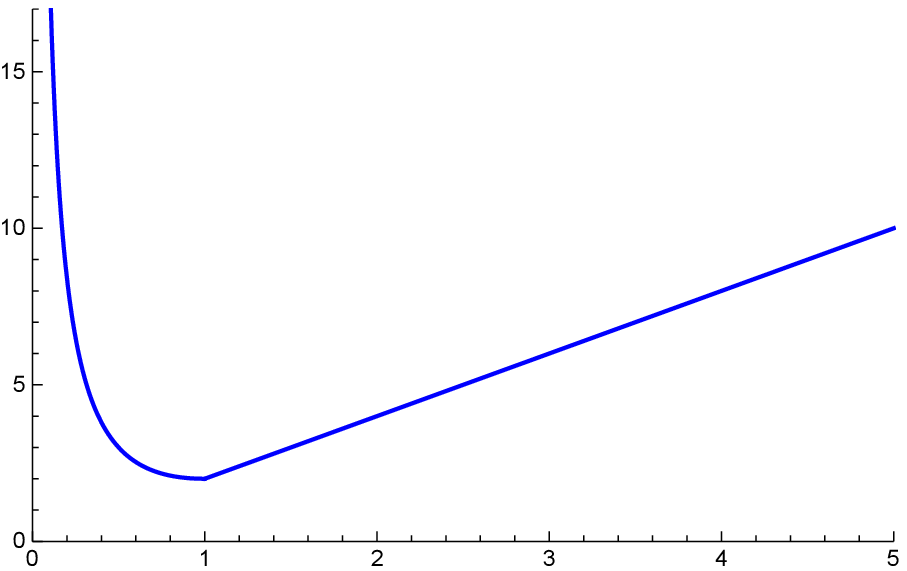}\\
{\tiny fig.\ $\!$1$\,\,$ Plot of $\t(\kappa)$ for $\alpha=\lambda_1=1$}
\end{center}
\end{figure}}
\def\soglianumerica{\begin{figure}[htb]\begin{center}
\includegraphics[width=7cm]{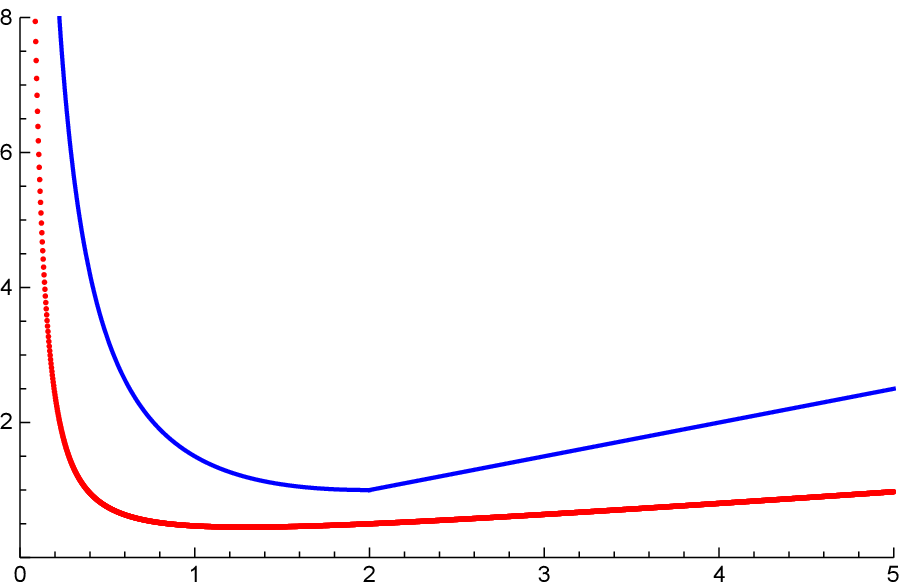}\\
{\tiny fig.\ $\!$2$\,\,$ Theoretical $\t(\kappa)$ (blue) vs actual $\t_\star (\kappa)$ computed numerically (red).}
\end{center}
\end{figure}}
\def\ratio{\begin{figure}[htb]\begin{center}
\includegraphics[width=7cm]{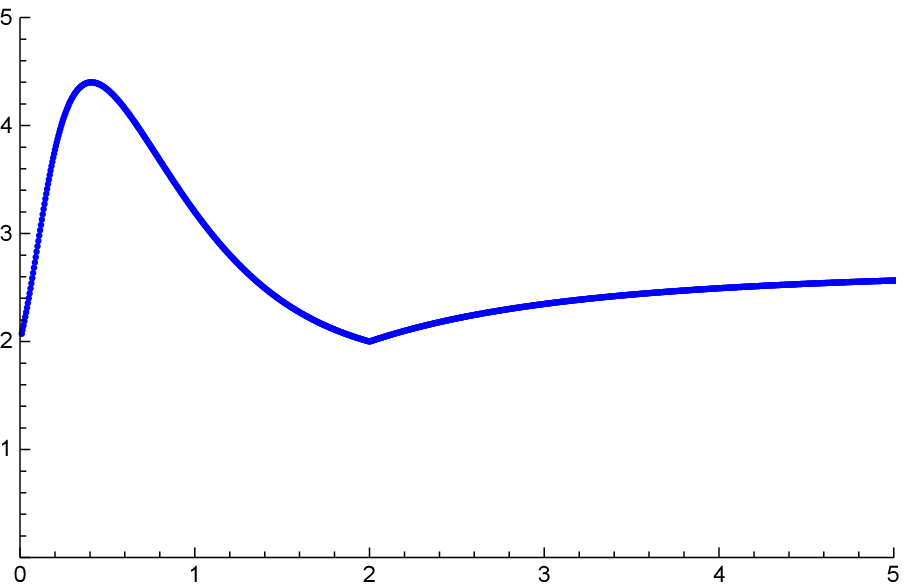}\\
{\tiny fig.\ $\!$3$\,\,$ Ratio $\t(\kappa)/\t_\star(\kappa)$ for $\kappa < 5$}
\end{center}
\end{figure}}
\def\doppio{\begin{figure}[htb]\begin{center}
\includegraphics[width=7cm]{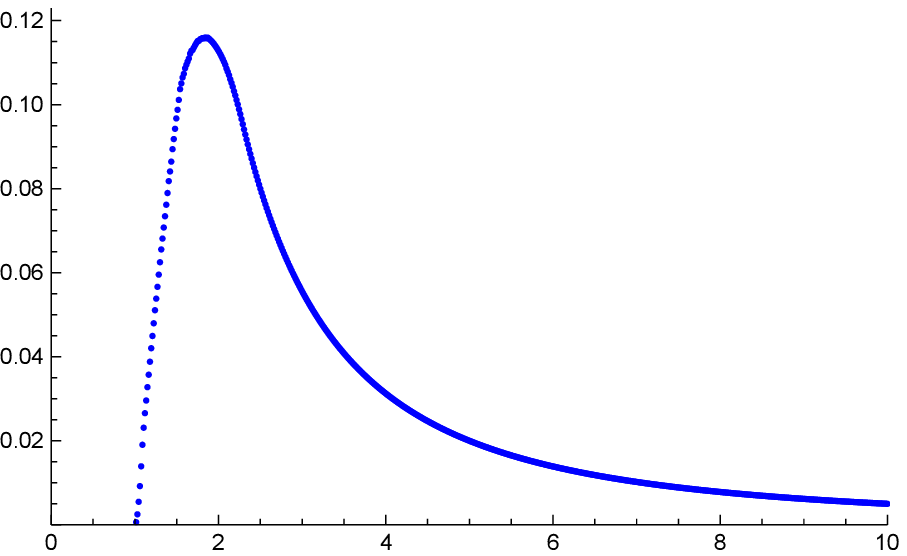}\,\,\,\includegraphics[width=7cm]{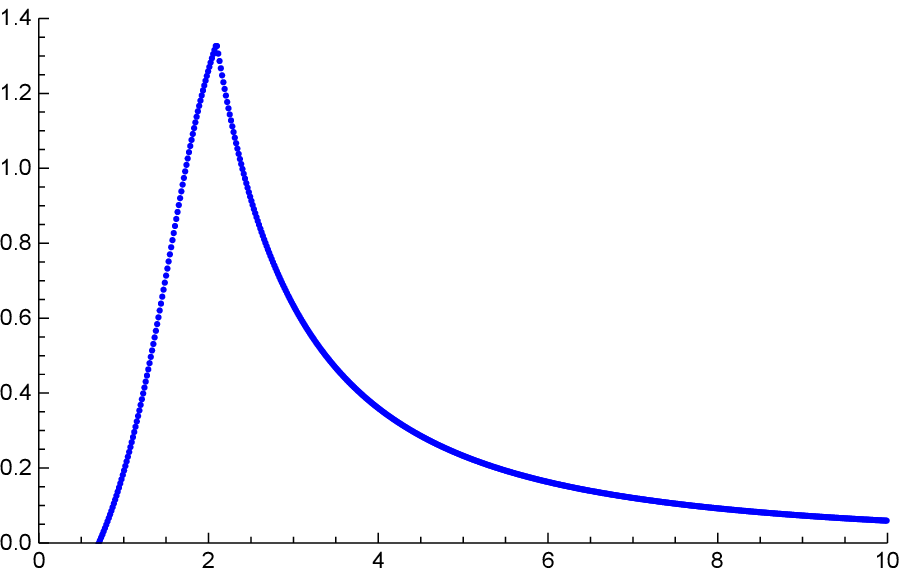}\\
{\tiny fig.\ $\!$4$\,\,$ Theoretical decay rate $\omega_b(\eta)$ (left) vs
actual decay rate $\omega_\star(\eta)$ (right) for $\eta < 10$.}
\end{center}
\end{figure}}
\def\H {{\mathcal H}}
\def\R {\mathbb{R}}
\def\t {{\mathsf t}}
\def\E {{\mathsf E}}
\def\D {{\mathfrak{D}}}
\def \l {\langle}
\def \r {\rangle}
\def \and {{\qquad\text{and}\qquad}}
\def \au {\rm}
\def \ti {\it}
\def \jou {\rm}
\def \bk {\it}
\def \no#1#2#3 {{\bf #1} (#3), #2.}
\def \eds#1#2#3 {#1, #2, #3.}
\title[Supercritical MGT-Fourier model]
{The MGT-Fourier model\\
in the supercritical case}
\author[M. Conti, L. Liverani, V. Pata]{Monica Conti, Lorenzo Liverani, Vittorino Pata}
\address{Politecnico di Milano - Dipartimento di Matematica
\newline\indent
Via Bonardi 9, 20133 Milano, Italy}
\email{monica.conti@polimi.it {\rm (M. Conti)}}
\email{lorenzo.liverani@polimi.it {\rm (L. Liverani)}}
\email{vittorino.pata@polimi.it {\rm (V. Pata)}}
\subjclass[2010]{35B40, 35Q74, 74F05}
\keywords{MGT equation, Fourier law, thermoviscoelasticity,
critical and supercritical regime,
solution semigroup, exponential stability.}
\begin{document}

\begin{abstract}
We address the energy transfer in the differential system
$$
\begin{cases}
u_{ttt}+\alpha u_{tt} - \beta \Delta u_t - \gamma \Delta u = -\eta \Delta \theta \\
\theta_t - \kappa \Delta \theta =\eta \Delta u_{tt}+ \alpha\eta \Delta u_t
\end{cases}
$$
made by a Moore-Gibson-Thompson equation in the supercritical regime, hence antidissipative,
coupled with the classical heat equation.
The asymptotic properties of the related solution semigroup depend
on the strength of the coupling, ruling the competition between the Fourier damping and
the MGT antidamping.
Exponential stability will be shown always to occur, provided that the coupling constant
is sufficiently large with respect to the other structural parameters.
A fact of general interest will be also discussed, namely, 
the impossibility of attaining the optimal exponential decay rate of a given
dissipative system via energy estimates.
\end{abstract}

\maketitle

\section{Preamble: The MGT Equation}

\noindent
The Moore-Gibson-Thompson (MGT) equation
is the third-order in time PDE
\begin{equation}
\label{MGT}
\partial_{ttt} u + \alpha \partial_{tt} u-\beta \Delta \partial_t u - \gamma \Delta u =0,
\end{equation}
ruling the evolution of the unknown variable
$u=u(\boldsymbol{x},t):\Omega\times [0,\infty) \to \R$,
where $\Omega\subset\R^N$ is a bounded domain with sufficiently smooth boundary $\partial\Omega$.
Here, $-\Delta$ is the Laplace-Dirichlet operator, while
$\alpha,\beta,\gamma>0$ are fixed structural parameters.

\smallskip
Equation~\eqref{MGT} originally arises in the modeling of wave
propagation in viscous thermally relaxing
fluids~\cite{MOG,THO}. Nonetheless, its first appearance goes back to a very old
paper of Stokes~\cite{STO}.
Later on, several authors understood that the MGT equation may pop up in
the description of a large variety of physical phenomena, ranging
from viscoelasticity to thermal conduction.
In particular, we want to highlight the interpretation of
\eqref{MGT} as a model for the vibrations in a standard linear
viscoelastic solid \cite{DAC,GOB}.
Such a model can also be obtained as a particular case of the
equation of linear viscoelasticity deduced within a rheological framework
using a linear combination of springs and dashpots (see e.g.\ \cite{DRO})
$$
\partial_{tt}u(t) - g(0)\Delta u(t) - \int_{0}^\infty g'(s)\Delta u(t-s)\,d s=0,
$$
upon choosing the exponential kernel
$$g(s)=\varkappa e^{-\alpha s}+\frac{\gamma}{\alpha},
$$
where the constant
\begin{equation}
\label{stabby}
\varkappa = \beta - \frac{\gamma}{\alpha}
\end{equation}
must be strictly positive (see \cite{DPMGT}).

\smallskip
As far as the mathematical analysis of \eqref{MGT} is concerned, there is nowadays a vast literature
(see, e.g., \cite{BU1,BU2,DPMGT,KLM,KLP,KN,MDT} and references therein).
Let us briefly subsume the main results obtained so far.
For every choice of the parameters
$\alpha,\beta,\gamma>0$, the MGT equation turns out
to generate a strongly continuous semigroup of solutions
on the natural weak energy space
$$
H^1_0(\Omega) \times H^1_0(\Omega) \times L^2(\Omega).
$$
Here, with standard notation, $L^2$ is the Lebesgue space of square summable functions,
while $H_0^1$ is the Sobolev space of square summable functions along with their first derivatives,
with null trace on the boundary $\partial\Omega$.
However, the asymptotic behavior of the solutions dramatically depends on the
constant $\varkappa$ defined in \eqref{stabby}, which in the MGT context is usually referred to as the
\emph{stability number}.
Indeed, depending on the sign of $\varkappa$, the equation may exhibit
dissipative or antidissipative features.
More precisely, we have the following
picture:
\begin{itemize}
\item[$\diamond$] If $\varkappa>0$ the solutions decay exponentially fast.
\smallskip
\item[$\diamond$] If $\varkappa=0$ the (nontrivial) solutions are bounded but do not decay.
\smallskip
\item[$\diamond$] If $\varkappa<0$ there are solutions with an exponential blow up.
\end{itemize}
For this reason, the regimes $\varkappa>0$, $\varkappa=0$ and $\varkappa<0$ are usually referred to
in the literature as
\emph{subcritical}, \emph{critical} and \emph{supercritical}, respectively.
In particular, when $\varkappa=0$
there is the conservation of an appropriate energy, equivalent to the square norm in the phase space.
In view of our previous discussion, we may conclude that the MGT equation is
a model of viscoelasticity \emph{only} in the subcritical regime,
whereas when $\varkappa\leq 0$ the viscoelastic interpretation
is completely lost. Nonetheless the critical and supercritical regimes
remain very interesting, both from the physical and the mathematical viewpoint.
In the critical regime, one might also ask whether or not a further
damping mechanism (e.g., of memory type) could induce the uniform decay of the energy,
hence driving the regime into subcritical. In general, this is false: an
example in this direction has been given in~\cite{DLP}.
In fact, each damping mechanism is peculiar, and it may require a sharp dedicated analysis.
It is then not a coincidence that almost all
the results available in the literature are set in the subcritical regime.

\section{Introduction}

\subsection{The MGT-Fourier model}
The object of our analysis is the system obtained by coupling the MGT equation~\eqref{MGT}
with the classical Fourier heat equation
\begin{equation}
\label{main_sys}
\begin{cases}
u_{ttt}+\alpha u_{tt} - \beta \Delta u_t - \gamma \Delta u =- \eta \Delta \theta, \\
\noalign{\vskip1mm}
\theta_t - \kappa \Delta \theta =  \eta \Delta u_{tt} + \alpha\eta \Delta u_t,
\end{cases}
\end{equation}
supplemented with the Dirichlet boundary conditions
$$
u(\boldsymbol{x},t) = \theta(\boldsymbol{x},t) = 0,\quad \boldsymbol{x} \in \partial \Omega.
$$
Here $\alpha, \beta, \gamma > 0$ are the usual MGT parameters,
$\kappa > 0$ is the thermal conductivity, while $\eta \neq 0$ is the coupling constant.

\smallskip
System~\eqref{main_sys} has been first addressed by the authors of~\cite{BUR},
within the key assumption
that the stability number $\varkappa$ defined in~\eqref{stabby}
be strictly positive. In which case,
there is a clear physical interpretation, namely, a thermoviscoelastic model describing the vibrations
in a viscoelastic heat conductor obeying the Fourier
heat conduction law. The main result of~\cite{BUR}, besides the generation
of the solution semigroup, is that
the total energy decays exponentially fast
for every value of the coupling constant $\eta$.
Let aside the undeniable interest for the model, which also happens to be the first example
in the literature of a coupled MGT equation,
the predicted exponential
decay is not that surprising. Indeed, each single equation
of \eqref{stabby} gives rise to an exponentially stable semigroup. In addition,
the coupling is fairly nice, in the sense that when one performs the basic estimates
in the weak energy space, the contributions produced by the $\eta$-terms
(which in principle could create problems) cancel each other.
In fact, the action of the coupling is typically the one of transferring energy,
as well as dissipation, between the equations of a given system. But in this case both equations
exhibit enough dissipation by themselves from the very beginning. It is also worth mentioning that
for the analogous system modeling a thermoviscoelastic plate
$$
\begin{cases}
u_{ttt}+\alpha u_{tt} + \beta \Delta^2 u_t + \gamma \Delta^2 u =- \eta \Delta \theta, \\
\noalign{\vskip1mm}
\theta_t - \kappa \Delta \theta =  \eta \Delta u_{tt} + \alpha\eta \Delta u_t,
\end{cases}
$$
where in the MGT equation $-\Delta$ is replaced by the bilaplacian,
the solution semigroup is not only exponentially stable,
but also analytic (see~\cite{CPPQ}).

\subsection{A general discussion}
There are instead interesting situations where a system is made, say, by two equations,
one of which dissipates through a damping mechanism, while the other one preserves the energy.
Now the coupling becomes essential, since its action allows to transfer dissipation to the
undamped equation, in such a way that the whole system becomes globally stable
as time goes to infinity.
Several examples of this kind can be found in the literature. For instance, we address the reader to the works
\cite{R1,R2,R3,R4,R5,CPPQ2,R6,R7,R8,R9,R10,R11,R12,R13,R14,R15,R16,R17,R18,R19}, just to name a few.
In some of these contributions, the dissipation is not mechanical,
but only thermal through a heat equation
of Fourier type. On the other hand, it is well known that the latter possesses highly regularizing properties,
due to the fact that the Fourier damping mechanism is particularly strong.
As a byproduct, even a very small coupling (e.g., with a coupling constant $|\eta|\ll 1$)
is enough to get stability.
But let now push our discussion a little bit further.
We may ask what happens if we couple a \emph{dissipative}
equation with an \emph{antidissipative} one.
The picture now is more intriguing, and the strength of the coupling comes into play.
To hope for stability, it is necessary that the equations
share their energies to a certain extent. This translates into the fact that the coupling cannot be
too weak. But even if the coupling is strong enough to bypass a certain critical threshold,
the system may remain unstable if the action of the antidamping is more effective than the one
of the damping. To the best of our knowledge, the first analysis of this kind
has been made in the very recent paper~\cite{CLP},
where a simple (yet not so simple) system of ODEs is considered, that is,
$$
\begin{cases}
\ddot u+u+a\dot u=\eta\dot v,\\
\ddot v+v-b \dot v=-\eta\dot u,
\end{cases}
$$
$a,b>0$ being a damping and an antidamping parameter, respectively.
Here, stability occurs only if $a>b$ and $|\eta|$ is sufficiently large.
Loosely speaking, we need not only more damping than antidamping, but also
a fairly good communication between the two equations.
There is also another interesting issue in connection with the case $a>b$, namely,
to find the value
of $\eta$ ensuring the best decay. The answer, in contrast to what one might think, is not $|\eta|$ as large
as possible; on the contrary, there is an optimal finite value of $|\eta|$, depending on
the parameters $a$ and $b$. However, this is not the
general rule.
For instance, one can construct a similar system of two oscillators, but with a different coupling involving
$u$ and $v$ rather than their derivatives, where the best decay rate is reached asymptotically when $|\eta|\to\infty$.

\subsection{The supercritical case}
In the light of our previous comments, we can now move to the core of the present paper,
which can be summarized by the following question:
\smallskip
\begin{center}
\begin{minipage}{14cm}
\emph{What happens if we consider the MGT-Fourier system \eqref{main_sys}
when the stability number $\varkappa$ is zero, or even negative?}
\end{minipage}
\end{center}
\smallskip
If $\varkappa\leq 0$, the MGT equation is no longer dissipative, and the
only dissipation mechanism is contributed by the heat equation. Even more so, if $\varkappa$ is strictly negative,
then we have a competition between the two equations, as the MGT one
becomes antidissipative.
In this case, we will prove that if the coupling constant is very small in modulus, the two equations
are almost unrelated, and the explosive character of the MGT one becomes predominant,
pushing the total energy to exponential blow up.
Instead, if $|\eta|$ is sufficiently large and $\varkappa$ slightly negative,
it is reasonable to expect that
the dissipation provided by the Fourier component should prevail. But as $\varkappa\to -\infty$ the
antidamping becomes stronger and stronger. Thus, in principle, it is hard to predict if stability can
still be attained via
the coupling. Nevertheless, for \emph{any} value of $\varkappa$, no matter how negative,
there exists a critical threshold for $|\eta|$ beyond which exponential stability occurs, meaning that
the damping mechanism of the classical heat equation is stronger than any possible structural antidamping
of the MGT equation. From the mathematical viewpoint, the main difficulty lies in the fact
that in \cite{BUR} all the estimates are obtained using the functional
$$
\mathsf W  = \int_\Omega |u_{tt}+ \alpha u_t|^2 d\boldsymbol{x}
+ \frac{\gamma}{\alpha}\int_\Omega |\nabla u_t+ \alpha \nabla u|^2 d\boldsymbol{x}
+ \varkappa\int_\Omega |\nabla u_t|^2 d\boldsymbol{x}
+ \int_\Omega |\theta|^2 d\boldsymbol{x},
$$
which, for $\varkappa>0$, turns out to be equivalent to the energy, and appears in a natural way when performing the basic
multiplications dictated by the phase space of the problem.
Unfortunately, $\mathsf W$ becomes a pseudoenergy when $\varkappa=0$,
so that its decay provides no information on the decay of the energy itself.
And $\mathsf W$ ceases to be even a pseudoenergy when
$\varkappa<0$, as it may assume negative values.
Accordingly, a more refined analysis is required, via further energy-like functionals.
Each of them contributes with terms with the right sign, but introducing
at the same time terms playing against dissipation. A delicate balance is needed, in order to
convey the whole system towards stability.

\subsection{The results}
We consider system~\eqref{main_sys}, where the MGT equation
lies either in the critical or in the supercritical regime. Hence, defining
$$
\mu = \gamma - \alpha \beta = - \alpha \varkappa,
$$
we restrict our analysis to the novel case
$\mu\geq 0$.
After proving the existence of the solution semigroup $S(t)$ on the natural weak energy space
$$
\H = H^1_0(\Omega) \times H^1_0(\Omega) \times L^2(\Omega)\times L^2(\Omega),
$$
we will show that, for every value $\mu \geq 0$,
the exponential decay of the energy takes place provided that the coupling constant
$\eta$ is sufficiently large in modulus.
More precisely, there exists a structural threshold $\t>0$, independent of $\eta$,
such that exponential stability occurs whenever
$$\eta^2 > \t\mu.$$
This tells in particular that in the critical regime $\mu=0$ we have
exponential stability for \emph{every} $\eta \neq 0$. Conversely, in the supercritical regime $\mu>0$,
exponentially growing trajectories always arise if $|\eta|$ is small.

Aiming to deepen our comprehension of the phenomenon in the supercritical regime $\mu>0$,
a further question is to be addressed:

\medskip
\centerline{\emph{How does the thermal conductivity $\kappa$ influence the stability threshold $\t$?}}
\medskip

\noindent
We will see that $\t\to \infty$ when $\kappa\to 0$, which is highly expected: if the heat equation is weakly
damped, a large coupling constant is needed in order to transfer enough dissipation
to the mechanical part. Much more surprising, at first glance, is that $\t\to \infty$
when $\kappa\to\infty$ either, as one might reckon that a stronger dissipation in the Fourier law would
transmit a stronger dissipation to the whole system. If so, one should obtain
the uniform decay of the energy by keeping $\eta$ fixed, upon arbitrarily increasing $\kappa$.
On the contrary, we propose the following physical interpretation, complying with our findings:
If $\kappa$ is very large, then $\theta$ tends to decay in a very short time, and the effect is that
the mechanical part, which would blow up in absence of heat interaction, does not see the coupling, unless
$|\eta|$ is sufficiently large compared with $\kappa$.

A further issue concerns with the decay rate in dependence of $\eta$, once all the other quantities are fixed.
What happens is that the optimal decay occurs for a certain value
$\pm \eta_\star $ of the coupling constant, depending on the
structural parameters, that clearly
satisfies the condition $\eta_\star ^2>\t\mu$, but at the same time is relatively small in modulus.
So there exists the most efficient coupling in terms of energy transmission.
And when $|\eta|\to\infty$, the decay rate eventually deteriorates, becoming zero in the limit.
But finding the \emph{exact} value of the best decay rate is quite another story.
Indeed, in the final Appendix we will show that, in general and in particular in our case,
establishing the best exponential decay rate
via energy estimates is a hopeless task.

\section{Functional Setting and Notation}
\label{SFSN}

\noindent
Let $(H,\l\cdot,\cdot\r,\|\cdot\|)$ be a real Hilbert space,
and let $A:H\to H$ be a strictly positive selfadjoint operator
with domain $\D(A)\subset H$, where the embedding is not necessarily compact.
For $\sigma \in\R$, we define the hierarchy of continuously nested Hilbert spaces
$$H^\sigma = \D(A^{\sigma/2}),$$
endowed with the scalar products and norms ($\sigma$ will be always omitted whenever zero)
$$\l u,v\r_{\sigma}
=\l A^{\sigma/2}u,A^{\sigma/2}v\r\qquad\text{and}\qquad
\|u\|_\sigma=\|A^{\sigma/2}u\|.$$
For $\sigma>0$, it is understood that $H^{-\sigma}$
denotes the completion of the domain, so that $H^{-\sigma}$ is the dual space of $H^\sigma$.
The symbol $\l\cdot,\cdot\r$ will also denote the duality pairing between
$H^{-\sigma}$ and $H^\sigma$. We recall the Poincar\'e inequality
$$\|u\|_{\sigma-1}\leq \frac1{\sqrt{\lambda_1}\,}\|u\|_\sigma, \quad\forall u\in H^\sigma,$$
where $\lambda_1>0$ is the minimum of the spectrum of $A$ (the first eigenvalue if the spectrum is discrete).
Finally, we introduce the phase space of our problem, namely, the product Hilbert space
$$\H = H^1 \times H^1 \times H \times H,$$
endowed with the norm
$$\|(u,v,w,\theta)\|_{\H}^2=\|v + \alpha u\|^2_1+\|w+ \alpha v\|^2
+ \|v\|^2_1 + \|\theta\|^2.$$
Such a norm, which comes from the MGT structure, is well known to be equivalent
to the standard product norm in $\H$ (see, e.g., \cite{DPMGT}).

\section{The Solution Semigroup}
\label{TSS}

\noindent
In greater generality, we consider the abstract problem
\begin{equation}
\label{main_sysA}
\begin{cases}
u_{ttt} + \alpha u_{tt} + \beta A u_t + \gamma A u =\eta A \theta, \\
\noalign{\vskip1mm}
\theta_t + \kappa A \theta =- \eta A u_{tt} - \alpha \eta A u_t,
\end{cases}
\end{equation}
in the MGT critical or supercritical regime, i.e., with
$$\mu = \gamma - \alpha \beta\geq 0.$$
The system is subject to the initial conditions
\begin{equation}
\label{main_sysA-IC}
\begin{cases}
u(0) = u_0, \\
u_t(0) = v_0, \\
u_{tt}(0) = w_0, \\
\theta(0) = \theta_0,
\end{cases}
\end{equation}
where $\boldsymbol{u}_0=(u_0,v_0,w_0,\theta_0)\in\H$ is an arbitrarily given initial datum.

\begin{remark}
For the concrete system~\eqref{main_sys} of the Introduction, $A$ is the
Laplace-Dirichlet operator $-\Delta$ acting on the Hilbert space
$H=L^2(\Omega)$, while $H^1=H_0^1(\Omega)$.
\end{remark}

The first result addresses the well-posedness of the problem.

\begin{theorem}
\label{existence}
For every $\boldsymbol{u}_0\in\H$, problem~\eqref{main_sysA}-\eqref{main_sysA-IC}
admits a unique weak solution
$$t\mapsto \boldsymbol{u}(t)=(u(t),u_t(t),u_{tt}(t),\theta(t))\in{\mathcal C}([0,\infty),\H),$$
satisfying, for every fixed $t\geq0$, the further continuity property
$$\boldsymbol{u}_0\mapsto \boldsymbol{u}(t)\in {\mathcal C}(\H,\H).$$
\end{theorem}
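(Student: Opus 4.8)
The plan is to recast \eqref{main_sysA}-\eqref{main_sysA-IC} as a linear abstract Cauchy problem on $\H$ and to prove that the underlying operator generates a strongly continuous semigroup, from which all the assertions follow at once. Setting $\boldsymbol u=(u,u_t,u_{tt},\theta)=(u,v,w,\theta)$, the system reads $\dot{\boldsymbol u}=\mathbb{A}\boldsymbol u$, $\boldsymbol u(0)=\boldsymbol u_0$, with
\begin{equation*}
\mathbb{A}(u,v,w,\theta)=\big(v,\ w,\ -\alpha w-\beta Av-\gamma Au+\eta A\theta,\ -\kappa A\theta-\eta Aw-\alpha\eta Av\big)
\end{equation*}
and $\D(\mathbb{A})$ the set of $\boldsymbol u\in\H$ such that $\mathbb{A}\boldsymbol u\in\H$, which amounts to $w\in H^1$ together with the two coupled second-order conditions $\beta v+\gamma u-\eta\theta\in H^2$ and $\kappa\theta+\eta(w+\alpha v)\in H^2$. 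Interpreting weak solutions as the orbits $\boldsymbol u(t)=S(t)\boldsymbol u_0$ of the $C_0$-semigroup generated by $\mathbb{A}$, the theorem is equivalent to the generation statement: once $S(t)$ is available, $\boldsymbol u\in\C([0,\infty),\H)$ is standard, uniqueness follows because the difference of two solutions solves the homogeneous problem with null datum, and the continuity $\boldsymbol u_0\mapsto\boldsymbol u(t)=S(t)\boldsymbol u_0\in\C(\H,\H)$ is just the boundedness of the linear operator $S(t)$ at each fixed $t$.

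To prove generation, the first move is to verify that $\mathbb{A}$ is densely defined and closed and that $\lambda I-\mathbb{A}$ is onto $\H$ for $\lambda$ large and real; the latter reduces, after eliminating the components $v$ and $w$, to a coupled elliptic system for $(u,\theta)$, solvable by Lax--Milgram since the associated bilinear form becomes coercive on $H^1\times H^1$ once $\lambda$ is large (the genuinely second-order coupling terms cancelling against each other, exactly as in \cite{BUR}). The crucial point, and the reason the argument of \cite{BUR} does not transfer verbatim, is that in the critical and supercritical regime $\mathbb{A}$ is no longer dissipative in the energy norm: the adapted functional $\mathsf W$ obeys the clean identity $\frac{d}{dt}\mathsf W=2\mu\|v\|_1^2-2\kappa\|\theta\|_1^2$, which can be positive when $\mu>0$, while for $\mu=0$ the functional $\mathsf W$ is only a pseudonorm, not controlling $\|v\|_1$. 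The plan is thus to complement $\mathsf W$ — whose time derivative is clean precisely because the $\eta$-contributions cancel pairwise — with auxiliary functionals produced by further multipliers (testing the MGT equation against $u_t$ and against $u$ yields quantities that feed back the missing control of $\|u_t\|_1$ and of $\|u\|_1$, at the cost of lower-order terms and of a cross term with $\theta$ absorbed via the Fourier dissipation and the Poincar\'e inequality). Balancing these with suitable weights should produce a functional $\mathcal L$ equivalent to $\|\boldsymbol u\|_\H^2$ obeying a differential inequality $\frac{d}{dt}\mathcal L\le C\mathcal L$ — exponential growth being allowed, and indeed expected for small $|\eta|$ — which, applied to the resolvent equation as well, delivers the Hille--Yosida bound and hence $\|\boldsymbol u(t)\|_\H\le M e^{Ct}\|\boldsymbol u_0\|_\H$; equivalently, in the concrete setting of \eqref{main_sys} one may construct $\boldsymbol u$ directly by a Galerkin scheme along the eigenfunctions of $A$, relying on the same a priori estimate and passing to the limit.

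I expect the main obstacle to be precisely this balancing: since the physical energy genuinely fails to be monotone when the MGT block is antidissipative, no single natural functional suffices, and each auxiliary multiplier reintroduces highest-order cross terms among $u$, $u_t$, $u_{tt}$ that the cancellation structure of $\mathsf W$ was designed to kill, so one must track them carefully and exploit small absorption constants to keep the surviving bad terms lower order relative to $\mathcal L$. By contrast, the surjectivity and closedness steps, and the deduction of uniqueness and continuous dependence, are routine. It is worth stressing that for well-posedness one only needs the crude inequality $\frac{d}{dt}\mathcal L\le C\mathcal L$ with an arbitrary constant $C$, which is why generation holds for every $\eta\neq 0$; the delicate sign-chasing that decides whether $C$ can be taken negative is deferred to the stability analysis.
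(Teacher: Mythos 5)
Your overall architecture --- reduce everything, by linearity, to a single a priori bound $\frac{d}{dt}\mathcal L\le C\mathcal L$ for a quadratic functional $\mathcal L$ equivalent to the energy $\E$, then run a Galerkin scheme (or Lumer--Phillips in the equivalent norm) and get uniqueness and continuous dependence for free --- is exactly the paper's, and those reductions are sound. The gap sits in the one nontrivial step, the construction of $\mathcal L$. The multipliers you name, $u_t$ and $u$ tested against the MGT equation, generate the functionals $\langle u_{tt},u_t\rangle+\frac{\alpha}{2}\|u_t\|^2+\frac{\gamma}{2}\|u\|_1^2$ and $\langle u_{tt}+\alpha u_t,u\rangle-\frac{1}{2}\|u_t\|^2+\frac{\beta}{2}\|u\|_1^2$: the good quantities $\beta\|u_t\|_1^2$ and $\gamma\|u\|_1^2$ they produce land in the \emph{derivative}, not in the functional, so neither supplies the missing positive multiple of $\|u_t\|_1^2$ needed to compensate the defect $-\frac{\mu}{\alpha}\|u_t\|_1^2$ in $\mathsf W$. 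One can try to restore positivity indirectly through the $\|u\|_1^2$ terms (a weight $m$ on the $u_t$-identity makes the $H^1$-level discriminant equal to $\gamma(\mu-m\beta/2)$, hence negative for $m>2\mu/\beta$), but then the accompanying zero-order cross term $m\langle u_{tt},u_t\rangle$ destroys the lower bound $\mathcal L\ge\frac1c\E$ when $\lambda_1$ is small. The other natural source of $+\|u_t\|_1^2$, namely the coupling functional $\mathsf F$ of Section 7, restores equivalence only under the lower bound $\eta^2>\mu/(\alpha\lambda_1)$ of Lemma 7.4, whereas well-posedness must hold for \emph{every} $\eta$, including the small ones for which the energy blows up. So, as written, your estimate does not close.

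The missing device (taken by the paper from \cite{DPMGT}) is to shift the damping coefficient: set $\alpha_m=\alpha+m$ with $m$ so large that $\varkappa_m=\beta-\gamma/\alpha_m>0$, and rewrite the system as a \emph{subcritical} MGT equation in $\alpha_m$ perturbed by the lower-order remainders $mu_{tt}$ and $m\eta Au_t$. The multiplier $u_{tt}+\alpha_m u_t$, together with the $\theta$-multiplication of the heat equation, then yields the functional
$$\mathsf W_m=\tfrac{\gamma}{\alpha_m}\|u_t+\alpha_m u\|_1^2+\|u_{tt}+\alpha_m u_t\|^2+\varkappa_m\|u_t\|_1^2+\|\theta\|^2,$$
a sum of squares with positive coefficients, hence equivalent to $\E$ for every $\eta$, which satisfies
$$\tfrac{d}{dt}\mathsf W_m+2\alpha_m\varkappa_m\|u_t\|_1^2+2\kappa\|\theta\|_1^2=2m\langle u_{tt},u_{tt}+\alpha_m u_t\rangle+2m\eta\langle Au_t,\theta\rangle.$$
The first remainder is of order zero, and the second is absorbed by Young into $2\kappa\|\theta\|_1^2$ plus a multiple of $\|u_t\|_1^2\le c\,\mathsf W_m$ (the Fourier dissipation is indispensable even here, since $\|\theta\|_1$ is not controlled by the energy). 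This gives $\frac{d}{dt}\mathsf W_m\le k\mathsf W_m$ and closes your argument. A minor point: no restriction $\eta\neq0$ is needed for well-posedness.
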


Accordingly, system~\eqref{main_sysA} generates a strongly continuous semigroup~\cite{CZW,PAZ}
$$S(t):\H\to \H,$$
acting by the rule
$$S(t)\boldsymbol{u}_0=\boldsymbol{u}(t),$$
whose corresponding energy at time $t$, for the initial datum $\boldsymbol{u}_0$, reads
$$\E(t)=\frac12\|S(t)\boldsymbol{u}_0\|^2_\H=\frac12\Big[\|u_t(t) + \alpha u(t)\|_1^2+\|u_{tt}(t) + \alpha u_t(t)\|^2
+\|u_t(t)\|_1^2+\|\theta(t)\|^2\Big].$$

\begin{proof}[Proof of Theorem \ref{existence}]
The conclusion follows by showing that the energy $\E(t)$
of any Galerkin approximate solution is
uniformly bounded on every time-interval $[0,T]$, with a bound 
of the form $\E(0)h(T)$, for some positive increasing function $h$.
Indeed, from the one side this 
produces the required uniform bound of the solution. From the other side,
since the equation is linear, the same bound holds
for the difference of two Galerkin approximants, yielding the
convergence of the entire approximating sequence to its (unique) limit 
in the topology of $\mathcal{C}([0,T],\H)$.
By the same token, the energy of the difference of two solutions
satisfies the same estimate, yielding the continuous dependence.

To this end, we set
$$\alpha_m=\alpha+m,$$
where we choose $m>0$ large enough that
$$
\varkappa_m = \beta - \frac{\gamma}{\alpha_m} =\frac{m\beta-\mu}{\alpha+m}> 0.
$$
This trick, first devised in \cite{DPMGT}, allows to rewrite \eqref{main_sysA} in the form
$$
\begin{cases}
u_{ttt} + \alpha_m u_{tt} + \beta A u_t + \gamma A u=\eta A \theta + m u_{tt}, \\
\noalign{\vskip1mm}
\theta_t + \kappa A \theta =- \eta A u_{tt} -\alpha_m \eta A u_t + m \eta A u_t.
\end{cases}
$$
Note that now the first equation is
a subcritical MGT one, plus some lower order terms,
which can be multiplied by the standard MGT multiplier $u_{tt} + \alpha_m u_t$, to get
\begin{align*}
&\frac{d}{dt}\Big[\frac{\gamma}{\alpha_m}\|u_t + \alpha_m u\|_1^2 +\|u_{tt} + \alpha_m u_t\|^2
+ \varkappa_m \|u_t\|^2_1 \Big] + 2\alpha_m \varkappa_m \|u_t\|^2_1 \\
& =  2\eta \l A \theta, u_{tt} +\alpha_m u_t\r + 2m \l u_{tt}, u_{tt}+\alpha_m u_t \r.
\end{align*}
Multiplying instead the second equation of the new system by $\theta$, we obtain
$$
\frac{d}{dt}\|\theta\|^2+ 2\kappa \|\theta\|_1^2= - 2\eta \langle A u_{tt}
+ \alpha_m A u_t, \theta\rangle +2 m \eta \l Au_t, \theta \r.
$$
Defining the energy functional
$$
{\mathsf W}_m(t) =
\frac{\gamma}{\alpha_m}\|u_t(t) + \alpha_m u(t)\|^2_1+\|u_{tt}(t)+ \alpha_m u_t(t)\|^2
+ \varkappa_m\|u_t(t)\|^2_1 + \|\theta(t)\|^2,
$$
and adding the two differential identities, we end up with
\begin{equation}
\label{meddler}
\frac{d}{dt}{\mathsf W}_m + 2\alpha_m\varkappa_m \|u_t\|^2_1 + 2\kappa \|\theta\|^2_1
=  2m \l u_{tt}, u_{tt}+\alpha_m u_t \r +2m \eta \l Au_t, \theta \r.
\end{equation}
Note that ${\mathsf W}_m$ is equivalent to the original energy $\E$ (and we write  ${\mathsf W}_m\sim \E$),  that is,
$$\frac1c\, \E\leq {\mathsf W}_m\leq c\,\E,$$
for some $c>1$. This is true because of the strict
positivity of the parameter $\varkappa_m$. Making use of the
Young and Poincar\'e inequalities, along with the equivalence
above, the right-hand side of \eqref{meddler} is immediately controlled by
$$2\alpha_m\varkappa_m \|u_t\|^2_1 + 2\kappa \|\theta\|^2_1 +k {\mathsf W}_m,$$
for some $k>0$. This leads to the differential inequality
$$\frac{d}{dt}{\mathsf W}_m \leq k {\mathsf W}_m,$$
and a final application of the Gronwall lemma yields
$${\mathsf W}_m(t)\leq {\mathsf W}_m(0)e^{kt},$$
providing the sought uniform bound.
\end{proof}

\begin{remark}
Clearly, the proof remains valid in the subcritical regime $\mu<0$, where one can simply take $m=0$.
In which case, \eqref{meddler} actually tells that $S(t)$ is a contraction semigroup with respect to the equivalent
norm in $\H$ dictated by ${\mathsf W}_0$.
\end{remark}

Once the well-posedness result is established,
the \emph{quasienergy}
$$
\mathsf W(t) =\frac{\gamma}{\alpha}\|u_t(t) + \alpha u(t)\|_1^2+\|u_{tt}(t) + \alpha u_t(t)\|^2
- \frac{\mu}{\alpha}\|u_t(t)\|_1^2+\|\theta(t)\|^2,
$$
corresponding to a solution $\boldsymbol{u}(t)$,
is easily seen to fulfill the equality
\begin{equation}
\label{EnEq}
\frac{d}{dt} \mathsf W(t) + 2\kappa\|\theta(t)\|^2_1 = 2\mu \|u_t(t)\|^2_1,
\end{equation}
for all sufficiently regular initial data.
Indeed, $\mathsf W$ is nothing but the functional $\mathsf W_0$ of the previous proof,
hence \eqref{EnEq} is merely obtained by setting $m=0$ in \eqref{meddler}.
In particular, all the terms containing the coupling constant $\eta$ disappear.
The reason why we refer to ${\mathsf W}$ as a quasienergy is that it may assume negative values
when $\mu>0$, whereas it is actually a \emph{pseudoenergy} in the critical regime $\mu=0$.
Although ${\mathsf W}$, contrary to what happens in the subcritical case analyzed in \cite{BUR}, cannot possibly be
equivalent to the energy, the identity~\eqref{EnEq} will turn out to be crucial in order to prove
the exponential stability of the semigroup.

\section{Exponential Blow Up}
\label{EBU}

\noindent
In the MGT supercritical regime $\mu>0$, we show that
system \eqref{main_sysA} exhibits solutions whose energy blows up exponentially fast,
whenever the coupling constant
$\eta$ is small in modulus.
We suppose for simplicity that the operator $A$ has at least one eigenvalue $\lambda>0$.
Choosing then a corresponding eigenvector $w$, we look for solutions to \eqref{main_sysA} of the
form
$$u(t)=\phi(t)w\qquad\text{and}\qquad\theta(t)=\psi(t)w,$$
for some $\phi\in {\mathcal C}^3([0,\infty))$
and $\psi\in {\mathcal C}^1([0,\infty))$. The functions $\phi$ and $\psi$ are easily seen to fulfill the
linear system of ODEs
\begin{equation}
\label{ODE}
\begin{cases}
\phi''' + \alpha \phi'' + \beta \lambda \phi' + \gamma \lambda \phi -\eta \lambda \psi=0, \\
\noalign{\vskip1mm}
\psi' + \kappa \lambda \psi + \eta \lambda \phi'' + \alpha \eta \lambda \phi'= 0.
\end{cases}
\end{equation} From the classical ODE theory~\cite{HS}, it is well known that
the asymptotic properties of the solutions to~\eqref{ODE} are completely determined
by the (complex) roots
of the characteristic polynomial
$$
p_\eta(z)= z^4 + (\alpha + \kappa \lambda)z^3 + (\beta \lambda + \alpha \kappa \lambda +
 \eta^2 \lambda^2) z^2 + (\gamma \lambda+ \beta \kappa \lambda^2 + \alpha \eta^2 \lambda^2 ) z + \gamma \kappa \lambda^2.
$$
In particular, if $p_\eta$ has a root with strictly positive real part, then there are
exponentially blowing up solutions. Accordingly, our claim follows from
the next proposition.

\begin{proposition}
\label{PROPOPRIMA}
Let $\mu > 0$. Then, for every $\eta\in\R$ with $|\eta|$ small enough,
$p_\eta$ possesses a complex root $z_\eta$ with $\Re z_\eta>0$.
\end{proposition}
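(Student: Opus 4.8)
The plan is to view $p_\eta$ as a small perturbation of $p_0$ and to exhibit a zero of $p_0$ that already lies in the open right half-plane; the desired zero $z_\eta$ will then be produced by a standard perturbation argument. The key algebraic remark is that the unperturbed polynomial factors as
$$
p_0(z) = (z+\kappa\lambda)\,q(z), \qquad q(z) = z^3 + \alpha z^2 + \beta\lambda\, z + \gamma\lambda,
$$
as one checks by direct multiplication. Here $q$ is precisely the characteristic polynomial of the MGT equation \eqref{MGT} associated with the eigenvalue $\lambda$: the linear factor contributes the stable Fourier root $-\kappa\lambda<0$, so all possible instability is encoded in $q$.

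The first substantive step is to show that, in the supercritical regime $\mu>0$, the cubic $q$ has a root $z_0$ with $\Re z_0>0$. Since $q$ has strictly positive coefficients, $q(x)\geq \gamma\lambda>0$ for every $x\geq 0$, so $q$ has no root in $[0,\infty)$. By the Routh--Hurwitz criterion, a cubic $z^3+a_2 z^2+a_1 z+a_0$ with $a_2,a_1,a_0>0$ has all its roots in the open left half-plane if and only if $a_1 a_2>a_0$; for $q$ this condition reads $\alpha\beta\lambda>\gamma\lambda$, i.e.\ $\varkappa>0$, i.e.\ $\mu<0$. Hence for $\mu>0$ the cubic $q$ is not Hurwitz, so it has at least one root with $\Re\geq 0$. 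Moreover, separating real and imaginary parts in $q(i\omega)=0$ forces $\omega^2=\gamma\lambda/\alpha$ and $\omega^2=\beta\lambda$, that is $\alpha\beta=\gamma$, i.e.\ $\mu=0$; since we assume $\mu>0$ strictly, $q$ has no purely imaginary root. Combining the two facts, $q$ — and therefore $p_0$ — admits a root $z_0$ with $\Re z_0=:\rho_0>0$ (necessarily non-real, so part of a conjugate pair).

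For the perturbation step one uses the explicit identity
$$
p_\eta(z)-p_0(z) = \eta^2\lambda^2\, z(z+\alpha),
$$
obtained by comparing the coefficients of $z^2$ and $z$, which shows that $p_\eta\to p_0$ uniformly on compact subsets of $\mathbb{C}$ as $\eta\to 0$. Fix $\varepsilon\in(0,\rho_0)$ small enough that $z_0$ is the only zero of $p_0$ in the closed disk $\overline{D}(z_0,\varepsilon)$, and put $c=\min_{|z-z_0|=\varepsilon}|p_0(z)|>0$. On that circle $|p_\eta(z)-p_0(z)|\leq \eta^2\lambda^2\max_{|z-z_0|=\varepsilon}|z|\,|z+\alpha|<c$ as soon as $|\eta|$ is small enough, so Rouché's theorem yields that $p_\eta$ has exactly as many zeros as $p_0$ inside $D(z_0,\varepsilon)$, hence at least one; call it $z_\eta$. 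Then $\Re z_\eta\geq \rho_0-\varepsilon>0$, which is the claim.

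The only genuinely delicate point is the second step — the Routh--Hurwitz analysis of the cubic $q$ together with the exclusion of imaginary roots when $\mu>0$; the rest is routine, precisely because the difference $p_\eta-p_0$ is the completely explicit two-term expression above, which makes the Rouché estimate immediate. It is worth noting that the argument breaks down exactly at $\mu=0$, where $q$ has a conjugate pair of roots sitting on the imaginary axis, consistently with the borderline (bounded, non-decaying) behaviour of the critical regime.
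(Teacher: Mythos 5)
Your proof is correct and follows the same overall strategy as the paper: factor $p_0(z)=(z+\kappa\lambda)q_0(z)$, locate a root of the MGT cubic $q_0$ with positive real part when $\mu>0$, and perturb. The difference is that you make self-contained the two steps the paper delegates to references: where the paper cites \cite{DPMGT} for the unstable root of $q_0$, you derive it from the Routh--Hurwitz criterion (the Hurwitz condition $\alpha\beta\lambda>\gamma\lambda$ fails precisely when $\mu\geq 0$) together with the explicit exclusion of purely imaginary and nonnegative real roots; and where the paper invokes the general continuity of polynomial roots in the coefficients \cite{US}, you compute $p_\eta-p_0=\eta^2\lambda^2 z(z+\alpha)$ and apply Rouch\'e's theorem directly. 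Both substitutions are sound (the identity for $p_\eta-p_0$ and the Routh--Hurwitz inequality check out against the stated coefficients), and your version has the modest advantage of being verifiable without consulting the cited works, at the cost of a slightly longer argument.
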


\begin{proof}
We first examine the case $\eta=0$, corresponding to the uncoupled system,
whose characteristic polynomial simplifies into
$$p_0(z)=(z+\kappa\lambda)q_0(z),$$
where
$$
q_0(z)= z^3 + \alpha z^2+ \beta \lambda z +\gamma \lambda.
$$
The roots of $q_0$ have been carefully analyzed in \cite{DPMGT},
where it is proved that when $\mu>0$ there are
always two complex conjugate solutions with positive real part,
for every fixed $\lambda>0$.
Therefore, $p_0$ has a root $z_0$ with $\Re z_0>0$.
At this point, we use the continuous dependence of the roots of a polynomial on its
coefficients (see \cite{US}).
Indeed, it is apparent that the coefficients of the polynomial $p_\eta$ converge to those of
$p_0$ as $\eta\to 0$. This implies that, for every $\varepsilon>0$, there exists $\delta>0$
with the following property:
the polynomial $p_\eta$ has a root $z_\eta$ falling within an $\varepsilon$-neighborhood
of $z_0$, provided that $|\eta|<\delta$.
Choosing $\varepsilon<\Re z_0$, we are done.
\end{proof}

\begin{remark}
The request that $A$ has at least one eigenvalue is not really necessary to prove the result,
although it greatly simplifies the analysis.
If $A$ has no eigenvalues,
the same conclusion can be drawn by analyzing the spectrum of the generator
of the semigroup $S(t)$, which leads to the study of the equation $p_\eta(z)=0$,
for values of $\lambda$ in the spectrum of $A$.
In which case, the existence of a root with positive real part
provides a lower bound for the growth rate of the semigroup
(see \cite{DPMGT}).
\end{remark}

\section{Exponential Stability}
\label{EDEC}

\noindent
We now come to the core of our work,
regarding the uniform exponential decay of the solutions to~\eqref{main_sysA}.
Let us first recall two definitions.

\begin{definition}
System~\eqref{main_sysA}, or more precisely its related semigroup $S(t)$,
is said to be \emph{exponentially stable} if
\begin{equation}
\label{ExpDecayEstimate}
\E(t)\leq M\E(0)e^{-\omega t},
\end{equation}
for some constants $\omega>0$ and $M\geq 1$, both independent of the initial data
of the problem.
\end{definition}

\begin{definition}
\label{defexpoptimal}
The \emph{exponential decay rate} of the energy $\E$ is the best (in the sense of largest) $\omega>0$ for which exponential stability
holds; namely, it is the number
$$\omega_\star =\sup\big\{\omega>0:\,\,\E(t)\leq M\E(0)e^{-\omega t}\,\text{ for some }M\geq 1\big\}.$$
\end{definition}

\smallskip
In order to state a quantitatively precise result,
we define the \emph{stability threshold} $\t$ depending on
$\kappa$, besides the other structural quantities of the problem
but $\eta$,
as
\begin{equation}
\label{tikappa}
\t(\kappa)=
\begin{dcases*}
\frac{2\kappa}{\alpha^2}\bigg(1-\frac{\alpha}{\kappa\lambda_1} +\frac{\alpha^2}{\kappa^2\lambda_1^2}\bigg)
&if $\kappa < \alpha/\lambda_1$, \\
\frac{2\kappa}{\alpha^2} &if $\kappa \geq \alpha/\lambda_1$,
\end{dcases*}
\end{equation}
which attains its global minimum at $\kappa = \alpha / \lambda_1$.
Recall that $\lambda_1>0$ is the minimum of the spectrum of $A$.

\soglia

\begin{theorem}
\label{ThmDECAY}
Let $\alpha,\beta,\gamma>0$ be fixed and such that
$\mu=\gamma-\alpha\beta\geq 0$.
Assume that
\begin{equation}
\label{main_condition}
\eta^2 > \t(\kappa)\mu.
\end{equation}
Then system~\eqref{main_sysA}
is exponentially stable.
\end{theorem}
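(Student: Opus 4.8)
The plan is to construct a single Lyapunov functional $\Lambda(t)$, equivalent to the energy $\E(t)$, satisfying a differential inequality of the form $\frac{d}{dt}\Lambda + c\,\Lambda \leq 0$ for some $c>0$, from which \eqref{ExpDecayEstimate} follows by Gronwall. The building blocks will be the quasienergy identity \eqref{EnEq}, which already yields $\frac{d}{dt}\mathsf W + 2\kappa\|\theta\|_1^2 = 2\mu\|u_t\|_1^2$, together with one or more auxiliary functionals designed to absorb the bad term $2\mu\|u_t\|_1^2$. The natural auxiliary functional is a multiple of $\langle A^{-1}(u_{tt}+\alpha u_t),u_t\rangle$ or of $\langle\theta,u_t\rangle_{-1}$-type cross terms, since differentiating these along \eqref{main_sysA} produces a negative multiple of $\|u_t\|_1^2$ (coming from the $\gamma Au$ and $\beta Au_t$ terms) at the cost of terms controlled by $\|\theta\|_1^2$, $\|u_{tt}+\alpha u_t\|^2$ and $\|u_t+\alpha u\|_1^2$.

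\smallskip\noindent
First I would fix notation: writing $v=u_t$, $w=u_{tt}$, $S=w+\alpha v$, and using the second equation $\theta_t+\kappa A\theta = -\eta A(w+\alpha v)= -\eta AS$, one has a clean expression for the coupling. The key observation is that the coupling constant $\eta$ enters \eqref{EnEq} nowhere, but it governs how much of $\|\theta\|_1^2$ is ``worth'' in terms of $\|u_t\|_1^2$. Concretely, I would introduce the functional
$$
\Psi(t) = \langle \theta(t), u_t(t)\rangle,
$$
or rather its suitably weighted variant, and compute $\frac{d}{dt}\Psi$ using both equations. The term $-\eta\langle AS,u_t\rangle$ from $\theta_t$ pairs against a term of order $\eta\langle A\theta,\text{(something)}\rangle$; the point is that a contribution proportional to $\eta^2\|u_t\|_1^2$ (with a favorable sign, or controllable) will emerge, and this is precisely what must beat $2\mu\|u_t\|_1^2$. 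Then I would form
$$
\Lambda = N\mathsf W + \varepsilon\,\Psi + (\text{lower-order correction terms}),
$$
choosing $N$ large and $\varepsilon$ small, and verify equivalence to $\E$ (which requires $\varepsilon$ small and uses that $\mathsf W + C\|u_t\|_1^2 \sim \E$ for $C$ large enough depending on $\mu$).

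\smallskip\noindent
The main obstacle — and the reason the precise threshold \eqref{tikappa} appears — is the bookkeeping of the constants when one estimates the cross terms generated by $\Psi$. The term $\langle A\theta, u_t\rangle$ must be split by Young's inequality between $\|\theta\|_2^2$ (or $\|\theta\|_1^2$, after invoking Poincaré with constant $\lambda_1$) and $\|u_t\|_1^2$; the split that optimizes the final coefficient of $\|u_t\|_1^2$ against the available $2\kappa\|\theta\|_1^2$ from \eqref{EnEq} is what produces the ratio $2\kappa/\alpha^2$ and, when $\kappa<\alpha/\lambda_1$, the correction factor $1 - \alpha/(\kappa\lambda_1) + \alpha^2/(\kappa^2\lambda_1^2)$. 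One has to be careful about whether $\theta$ appears differentiated once or twice in space, since only one power of $\kappa\|\theta\|_1^2$ is available as dissipation; this dichotomy (spectrum below vs.\ above $\alpha/\lambda_1$) is exactly the source of the two cases in \eqref{tikappa}. A secondary technical point is that $\mathsf W$ is only a quasienergy, so the equivalence $\Lambda\sim\E$ cannot be read off $\mathsf W$ alone — one must add enough of a nonnegative functional dominating $\|u_t\|_1^2$, and check that the combination $\frac{\gamma}{\alpha}\|v+\alpha u\|_1^2 - \frac{\mu}{\alpha}\|v\|_1^2 + C\|v\|_1^2$ is coercive, which holds once $C>\mu/\alpha$.

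\smallskip\noindent
Finally, I would collect all contributions into a single inequality
$$
\frac{d}{dt}\Lambda + 2\varepsilon\kappa\|\theta\|_1^2 + \delta_1\|u_t\|_1^2 + \delta_2\|u_{tt}+\alpha u_t\|^2 + \delta_3\|u_t+\alpha u\|_1^2 \leq 0
$$
with all $\delta_i>0$ precisely under the strict inequality \eqref{main_condition} (equality $\eta^2=\t\mu$ being the borderline where some $\delta_i$ degenerates to zero), then bound the left side below by $\omega\Lambda$ using the equivalence $\Lambda\sim\E$ and the controllability of all four norm pieces by the dissipation terms (a standard consequence of the MGT-norm structure recalled in Section~\ref{SFSN}). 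Gronwall then gives $\Lambda(t)\leq\Lambda(0)e^{-\omega t}$, hence \eqref{ExpDecayEstimate}.
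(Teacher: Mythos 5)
Your overall strategy is the paper's own: the proof in the paper combines the quasienergy identity \eqref{EnEq} with exactly the two kinds of auxiliary functionals you describe, namely
$\mathsf F = \eta\langle\theta,u_t\rangle + \tfrac{\eta^2}{2}\|u_t\|_1^2 + \tfrac12\|\theta\|_{-1}^2$
(your $\Psi$, completed with the exact-derivative terms needed to make $\frac{d}{dt}\mathsf F$ come out clean) and
$\mathsf G = -\langle u_t-\alpha u,\,u_{tt}+\alpha u_t\rangle$
(your $\langle A^{-1}(u_{tt}+\alpha u_t),u_t\rangle$-type corrector), then closes with an equivalence lemma and Gronwall. So the architecture is right. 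The gap is in the quantitative core, which is the actual content of the theorem: you assert that the bookkeeping ``produces the ratio $2\kappa/\alpha^2$'' and the correction factor in \eqref{tikappa}, but you never carry it out, and the one concrete prescription you do give --- form $\Lambda = N\mathsf W + \varepsilon\Psi+\cdots$ with ``$N$ large and $\varepsilon$ small'' --- would fail for $\mu>0$. With that scaling the coefficient of $\|u_t\|_1^2$ in $\frac{d}{dt}\Lambda$ is $2N\mu - \tfrac{\varepsilon\alpha\eta^2}{2}+\cdots$, which is positive (bad) whenever $\varepsilon/N$ is small. The weight $\rho$ on the cross functional relative to $\mathsf W$ is \emph{not} a free perturbation parameter: it must exceed $4\mu/(\alpha\eta^2)$ to absorb the antidamping term $2\mu\|u_t\|_1^2$, while its cost $\tfrac{\rho\kappa^2}{\alpha}\|\theta\|_1^2$ (plus, for $\kappa<\alpha/\lambda_1$, the Poincar\'e-converted $\tfrac{\rho\alpha}{\lambda_1^2}\|\theta\|_1^2$ penalty) must stay below the available $2\kappa\|\theta\|_1^2$. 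The nonemptiness of this admissible window for $\rho$ is \emph{exactly} the condition $\eta^2>\t(\kappa)\mu$, and the paper fixes $\rho$ at the specific value \eqref{epsss} that balances the two constraints. Only the weight on $\mathsf G$ is a genuinely small parameter ($\varepsilon^2$ in the paper). Without exhibiting this window --- i.e., without the computation showing
$\tfrac{\rho\alpha\eta^2}{2}-2\mu = 2\kappa-\tfrac{\rho\kappa^2}{\alpha}+\tfrac{\rho\kappa}{\lambda_1}-\tfrac{\rho\alpha}{\lambda_1^2}=\sigma>0$
under \eqref{main_condition} --- you have not proved the stated threshold, only the soft statement that stability holds for $|\eta|$ large enough.

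A secondary imprecision: for the equivalence $\Lambda\sim\E$ it is not enough that the coefficient of $\|u_t\|_1^2$ exceed $\mu/\alpha$; the cross term $\rho\eta\langle\theta,u_t\rangle$ must be split against $\|\theta\|^2+\tfrac{\rho}{2}\|\theta\|_{-1}^2$ as well, and this works only under an additional lower bound on $\rho$ (condition \eqref{NOR} in the paper), which one must then check is implied by \eqref{main_condition} and the choice \eqref{epsss}. This is routine but not automatic, and it is another place where the ``$\varepsilon$ small'' philosophy hides a constraint that in fact points in the opposite direction.
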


Theorem~\ref{ThmDECAY} tells that in the critical regime $\mu=0$ we have
exponential stability for every $\eta \neq 0$.
On the other hand, in the supercritical regime $\mu>0$, the exponential decay of the energy
depends on the interplay between the coupling constant $\eta$
and the Fourier coefficient $\kappa$.
Observe also that at the global minimum we have
$$
\t (\alpha/\lambda_1) = \frac{2}{\alpha\lambda_1}.
$$
Thus the value $\kappa=\alpha/\lambda_1$ is ``optimal", being the one requiring the weakest
coupling to drive the system to stability. For such a $\kappa$ condition~\eqref{main_condition}
reads
$$
\eta^2 > \frac{2\mu}{\alpha\lambda_1}.
$$
The proof, carried out in the next section, requires several steps.

\section{Proof of Theorem~\ref{ThmDECAY}}

\noindent
As customary, we will work with sufficiently regular solutions,
which stand the forthcoming calculations. The final conclusions follow by density.

\subsection{Auxiliary functionals}
We introduce the auxiliary functionals
\begin{align*}
\mathsf F(t) &= \eta \l \theta(t), u_t(t) \r + \frac{\eta^2}{2} \| u_t(t) \|^2_1 + \frac{1}{2} \|\theta(t)\|^2_{-1},\\
\noalign{\vskip1mm}
\mathsf G(t) &= -\l u_t(t) - \alpha u(t), u_{tt}(t) + \alpha u_t(t) \r.
\end{align*}

\begin{lemma}
\label{ineqF}
The following inequality holds:
$$
\frac{d}{dt} \mathsf F + \frac{\alpha \eta^2}{2} \|u_t(t)\|^2_1 \leq
\frac{ \kappa^2}{\alpha}\| \theta(t) \|^2_1 + \Big(\frac{\alpha}{\lambda_1} - \kappa \Big) \|\theta(t) \|^2.
$$
\end{lemma}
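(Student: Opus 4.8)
The plan is to differentiate $\mathsf F$ along a sufficiently regular solution of~\eqref{main_sysA} and then eliminate $\theta_t$ by means of the second equation. Writing the duality pairing as $\l u,v\r_1=\l Au,v\r$ and $\l u,v\r_{-1}=\l A^{-1}u,v\r$, term–by–term differentiation yields
$$
\frac{d}{dt}\mathsf F=\eta\l\theta_t,u_t\r+\eta\l\theta,u_{tt}\r+\eta^2\l Au_t,u_{tt}\r+\l\theta_t,A^{-1}\theta\r ,
$$
and I would substitute $\theta_t=-\kappa A\theta-\eta Au_{tt}-\alpha\eta Au_t$ into the first and the last summands. The key point is that this substitution generates precisely the two cancellations one needs: from $\eta\l\theta_t,u_t\r$ one picks up $-\eta^2\l Au_{tt},u_t\r$, which kills $\eta^2\l Au_t,u_{tt}\r$, while from $\l\theta_t,A^{-1}\theta\r$ one picks up $-\eta\l Au_{tt},A^{-1}\theta\r=-\eta\l u_{tt},\theta\r$, which kills $\eta\l\theta,u_{tt}\r$. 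Hence every occurrence of $u_{tt}$ drops out, and what survives is the clean identity
$$
\frac{d}{dt}\mathsf F+\alpha\eta^2\|u_t\|_1^2=-\kappa\eta\l\theta,u_t\r_1-\alpha\eta\l\theta,u_t\r-\kappa\|\theta\|^2 .
$$

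From here the proof is a short Young–Poincar\'e estimate. Subtracting $\frac{\alpha\eta^2}{2}\|u_t\|^2_1$ from both sides, a negative term $-\frac{\alpha\eta^2}{2}\|u_t\|^2_1$ appears on the right, and I would split it into two equal halves, one to absorb each of the two cross terms. For the term in $H^1$ I would use Cauchy--Schwarz in $H^1$ and then Young,
$$
-\kappa\eta\l\theta,u_t\r_1\leq \kappa|\eta|\,\|\theta\|_1\|u_t\|_1\leq \frac{\alpha\eta^2}{4}\|u_t\|_1^2+\frac{\kappa^2}{\alpha}\|\theta\|_1^2 ,
$$
while for the lower–order cross term I would \emph{first} invoke the Poincar\'e inequality $\|u_t\|\leq\lambda_1^{-1/2}\|u_t\|_1$ and \emph{then} Young,
$$
-\alpha\eta\l\theta,u_t\r\leq \alpha|\eta|\,\|u_t\|\,\|\theta\|\leq \frac{\alpha|\eta|}{\sqrt{\lambda_1}}\|u_t\|_1\|\theta\|\leq \frac{\alpha\eta^2}{4}\|u_t\|_1^2+\frac{\alpha}{\lambda_1}\|\theta\|^2 .
$$
Summing these two bounds absorbs exactly the spare $\frac{\alpha\eta^2}{2}\|u_t\|_1^2$ and, together with the leftover $-\kappa\|\theta\|^2$, produces precisely $\frac{\kappa^2}{\alpha}\|\theta\|_1^2+\big(\frac{\alpha}{\lambda_1}-\kappa\big)\|\theta\|^2$ on the right, which is the assertion.

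I do not anticipate a genuine obstacle: the only delicate point is procedural. One must apply Poincar\'e to $u_t$ (trading $\|u_t\|$ for $\lambda_1^{-1/2}\|u_t\|_1$) rather than to $\theta$, since the latter choice would generate a $\|u_t\|^2$–term with nothing available to absorb it; and one must keep exactly the weight $\alpha$ in both Young inequalities so that the two quarters of $\alpha\eta^2\|u_t\|_1^2$ match and the coefficient in front of $\|\theta\|_1^2$ comes out as $\kappa^2/\alpha$. Finally, all the manipulations are legitimate for regular data, and the resulting differential inequality for $\mathsf F$ extends to arbitrary solutions by density, in accordance with the standing convention of the section.
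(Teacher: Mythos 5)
Your proof is correct and follows essentially the same route as the paper: you derive the identity $\frac{d}{dt}\mathsf F+\alpha\eta^2\|u_t\|_1^2=-\kappa\eta\l\theta,u_t\r_1-\kappa\|\theta\|^2-\alpha\eta\l\theta,u_t\r$ (which the paper states as a ``direct computation'') and then apply exactly the same two Young--Poincar\'e bounds with the weights $\kappa^2/\alpha$ and $\alpha/\lambda_1$. The only minor quibble is your closing remark: applying Poincar\'e to $\theta$ first would not actually be fatal, since the resulting $\|u_t\|^2$ term could still be converted to $\lambda_1^{-1}\|u_t\|_1^2$ afterwards; but this does not affect the validity of your argument.
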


\begin{proof}
Via direct computations,
$$
\frac{d}{dt}\mathsf F +\alpha \eta^2 \|u_t\|^2_1= - \kappa \eta \l \theta, u_t \r_1
-  \kappa \|\theta \|^2 - \alpha \eta \l \theta, u_t \r.
$$
Using the Young and Poincar\'e inequalities, we estimate the terms in the right-hand side as
$$
-\alpha \eta \l \theta, u_t \r  \leq \frac{\alpha}{\lambda_1} \|\theta\|^2 + \frac{\alpha \eta^2}{4} \|u_t\|^2_1,
$$
and
$$
-\kappa \eta \l \theta, u_t \r_1 \leq \frac{ \kappa^2}{\alpha} \|\theta\|^2_1 + \frac{\alpha \eta^2}{4}\|u_t\|^2_1.
$$
This will do.
\end{proof}

\begin{remark}
In the Young inequalities above we used the weights $(1/2,1/2)$. One might argue that the more general
weights $(\nu/2,1/2\nu)$ should be used instead, looking eventually for $\nu>0$
producing the best estimate. However, doing so, one would realize \emph{a posteriori} that the optimal
value is indeed $\nu=1/2$.
\end{remark}

\begin{lemma}
\label{ineqPsi}
The following inequality holds:
$$
\frac{d}{dt} \mathsf G + \frac{\gamma}{2\alpha} \|u_t + \alpha u\|^2_1
+ \frac{1}{2} \|u_{tt} + \alpha u_t\|^2 \leq  \ell \|u_t\|^2_1+\eta \l \theta, u_t + \alpha u \r_1
- 2\eta \l \theta, u_t \r_1,
$$
where
$
\ell= \frac{4\gamma^2+\mu^2}{2\alpha\gamma} + \frac{2\alpha^2}{\lambda_1}.
$
\end{lemma}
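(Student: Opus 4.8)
The plan is to differentiate $\mathsf G$ along a sufficiently regular solution and use the first equation of \eqref{main_sysA} to remove the top-order contribution. Starting from
$$\frac{d}{dt}\mathsf G = -\l u_{tt} - \alpha u_t, u_{tt}+\alpha u_t\r - \l u_t - \alpha u, u_{ttt}+\alpha u_{tt}\r,$$
substituting $u_{ttt}+\alpha u_{tt} = -\beta A u_t - \gamma A u + \eta A\theta$ and using the identity $\l f, Ag\r = \l f,g\r_1$, I expect to land on
$$\frac{d}{dt}\mathsf G = -\|u_{tt}\|^2 + \alpha^2\|u_t\|^2 + \beta\l u_t - \alpha u, u_t\r_1 + \gamma\l u_t - \alpha u, u\r_1 - \eta\l u_t - \alpha u, \theta\r_1.$$
The first thing to notice is that the $\eta$-term needs no estimate whatsoever: since $\eta\l\theta, u_t+\alpha u\r_1 - 2\eta\l\theta, u_t\r_1 = -\eta\l u_t - \alpha u, \theta\r_1$, the coupling contribution is precisely the one appearing on the right-hand side of the claimed inequality, and is simply carried over. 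This is what makes the argument insensitive to the size of $\eta$.

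It then remains to bound the four $\eta$-free terms. For the second-order ones, completing the square via $\tfrac12\|u_{tt}+\alpha u_t\|^2 = \tfrac12\|u_{tt}\|^2 + \alpha\l u_{tt}, u_t\r + \tfrac{\alpha^2}{2}\|u_t\|^2$ and a single Young inequality on the stray cross term $\alpha\l u_{tt}, u_t\r \le \tfrac12\|u_{tt}\|^2 + \tfrac{\alpha^2}{2}\|u_t\|^2$ yield $-\|u_{tt}\|^2 + \tfrac12\|u_{tt}+\alpha u_t\|^2 \le \alpha^2\|u_t\|^2$; adding the explicit $\alpha^2\|u_t\|^2$ and applying Poincar\'e produces $\tfrac{2\alpha^2}{\lambda_1}\|u_t\|^2_1$, the second summand of $\ell$.

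For the first-order terms, I would expand $\beta\l u_t - \alpha u, u_t\r_1 + \gamma\l u_t - \alpha u, u\r_1 + \tfrac{\gamma}{2\alpha}\|u_t+\alpha u\|^2_1$ in the $\|\cdot\|_1$-geometry and invoke $\alpha\beta = \gamma - \mu$: the $\l u_t, u\r_1$-coefficient then collapses to $\gamma+\mu$ and the $\|u\|^2_1$-coefficient to $-\tfrac{\alpha\gamma}{2}$, so that the bracket equals $(\beta+\tfrac{\gamma}{2\alpha})\|u_t\|^2_1 + (\gamma+\mu)\l u_t, u\r_1 - \tfrac{\alpha\gamma}{2}\|u\|^2_1$. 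The key is that the $\|u\|^2_1$-coefficient is negative, so the Young inequality $(\gamma+\mu)\l u_t, u\r_1 \le \tfrac{\alpha\gamma}{2}\|u\|^2_1 + \tfrac{(\gamma+\mu)^2}{2\alpha\gamma}\|u_t\|^2_1$, with weight chosen precisely so that the $\|u\|^2_1$-term cancels with no remainder, leaves $\big(\beta+\tfrac{\gamma}{2\alpha}+\tfrac{(\gamma+\mu)^2}{2\alpha\gamma}\big)\|u_t\|^2_1$; substituting $\beta = (\gamma-\mu)/\alpha$ once more and simplifying produces exactly $\tfrac{4\gamma^2+\mu^2}{2\alpha\gamma}$, the first summand of $\ell$. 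Collecting everything gives the stated inequality.

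The computation itself is routine; the only genuinely delicate points are the two already flagged: recognizing that the $\eta$-terms match the right-hand side verbatim, so they never have to be controlled, and tuning the Young weights sharply enough — both for the $u_{tt}$ cross term and, above all, for the $\l u_t, u\r_1$ term — that the a priori dangerous $\|u\|^2_1$ contribution is absorbed exactly and the algebra condenses to the precise constant $\ell$. A looser split would leave an uncontrolled $\|u\|^2_1$ or $\|u\|^2$ term, or a strictly worse constant.
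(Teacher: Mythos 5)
Your proof is correct and follows essentially the same route as the paper: differentiate $\mathsf G$, substitute the first equation, observe that the coupling terms pass through verbatim, and close with sharply weighted Young and Poincar\'e inequalities; the algebra checks out and reproduces the exact constant $\ell$. The only difference is bookkeeping — the paper first rewrites the identity in terms of the quantities $u_t+\alpha u$ and $u_{tt}+\alpha u_t$ and then sacrifices half of each good term, whereas you expand in the raw variables and complete the squares at the end — but the two computations are equivalent.
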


\begin{proof}
Via direct computations,
\begin{align*}
&\frac{d}{dt} \mathsf G + \frac{\gamma}{\alpha} \|u_t + \alpha u\|^2_1
+ \|u_{tt} + \alpha u_t\|^2 \\
\noalign{\vskip1mm}
&=\frac{2\gamma+\mu}{\alpha} \l u_t, u_t + \alpha u\r_1
+ \eta \l \theta, u_t + \alpha u \r_1 \\
&\quad-2 \eta \l \theta, u_t \r_1 - \frac{2\mu}{\alpha} \|u_t\|^2_1 + 2\alpha \l u_t, u_{tt} + \alpha u_t \r.
\end{align*}
By the Young inequality,
\begin{align*}
\frac{2\gamma+\mu}{\alpha} \l u_t, u_t + \alpha u\r_1 &\leq \frac{\gamma}{2\alpha}\|u_t + \alpha u\|^2_1
+\frac{(2\gamma+\mu)^2}{2\alpha\gamma}\|u_t\|^2_1, \\
\noalign{\vskip1mm}
2\alpha \l u_t, u_{tt} + \alpha u_t \r &\leq \frac{1}{2}\|u_{tt} + \alpha u_t\|^2 + \frac{2\alpha^2}{\lambda_1}\|u_t\|^2_1,
\end{align*}
yielding the claim.
\end{proof}

We are now ready to
define our energy functional
$$
\mathsf L(t) = \mathsf W(t) + \rho \mathsf F(t) + \varepsilon^2 \mathsf G(t),
$$
for some $\rho>0$ and $\varepsilon>0$ to be chosen later.
Combining Lemma \ref{ineqF} and Lemma \ref{ineqPsi} together with
\eqref{EnEq}, we get
\begin{align*}
&\frac{d}{dt} \mathsf L + \frac{\varepsilon^2 \gamma}{2\alpha} \|u_t + \alpha u\|^2_1
+ \frac{\varepsilon^2}{2} \|u_{tt} + \alpha u_t \|^2 \\
\noalign{\vskip1mm}
&\quad +\Big( \frac{\rho \alpha \eta^2}{2} - 2 \mu - \varepsilon^2\ell \Big) \|u_t\|^2_1
+ \rho \Big(\kappa - \frac{\alpha}{\lambda_1}\Big) \|\theta \|^2 + \Big(2 \kappa
- \frac{\rho \kappa^2}{\alpha} \Big)\|\theta\|^2_1 \\
\noalign{\vskip1mm}
&\leq \varepsilon^2 \eta\l \theta,  u_t + \alpha u \r_1 - 2 \varepsilon^2  \eta\l \theta, u_t \r_1.
\end{align*}
\noindent
We control the right-hand side as
\begin{align*}
\varepsilon^2 \eta\l \theta,  u_t + \alpha u \r_1 &\leq \frac{\varepsilon \eta}{2} \|\theta\|^2_1
+ \frac{\varepsilon^3\eta}{2} \|u_t+ \alpha u\|^2_1, \\
\noalign{\vskip1mm}
 -2 \varepsilon^2 \eta\l \theta, u_t \r_1 &\leq \varepsilon \eta \|\theta\|^2_1 + \varepsilon^3\eta\|u_t\|^2_1.
\end{align*}
Hence, we end up with
\begin{align}
\label{exp_decaymah}
&\frac{d}{dt} \mathsf L + \frac{\varepsilon^2}{2} \Big( \frac{\gamma}{\alpha} - \varepsilon \eta \Big) \|u_t + \alpha u\|^2_1
+ \frac{\varepsilon^2}{2} \|u_{tt} + \alpha u_t \|^2 \\
&\quad+\Big( \frac{\rho \alpha \eta^2}{2} - 2 \mu - \varepsilon^2 \ell -\varepsilon^3 \eta \Big) \|u_t\|^2_1 \notag\\
&\quad+ \rho \Big(\kappa - \frac{\alpha}{\lambda_1}\Big)  \|\theta \|^2
+  \Big(2 \kappa - \frac{\rho \kappa^2}{\alpha} - \frac{3\varepsilon\eta}{2} \Big) \|\theta\|^2_1\leq0.\notag
\end{align}
At this point, we show that we can choose $\rho$ and $\varepsilon$ in such a way to obtain a satisfactory
differential inequality. Here, assumption~\eqref{main_condition} plays a crucial role.

\begin{lemma}
\label{ineqL}
Let \eqref{main_condition} hold. Then, there exists
$\rho>0$ with the following property: for every $\varepsilon>0$ sufficiently small,
there is $\omega=\omega(\varepsilon)>0$
such that
$$
\frac{d}{dt} \mathsf L + \omega \E\leq 0.
$$
\end{lemma}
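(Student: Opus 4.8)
The plan is to work from inequality~\eqref{exp_decaymah} and choose the two free parameters in the right order, $\rho$ first and then $\varepsilon$, so that every coefficient on the left-hand side becomes nonnegative (and the ones we need strictly positive). Observe that the three terms involving $\|u_t+\alpha u\|_1^2$, $\|u_{tt}+\alpha u_t\|^2$ and $\|\theta\|_1^2$ are already ``good'' once $\varepsilon$ is small, while the coefficient $\rho(\kappa-\alpha/\lambda_1)$ of $\|\theta\|^2$ may be negative when $\kappa<\alpha/\lambda_1$; that missing positivity must be recovered from the $\|\theta\|_1^2$ term via Poincar\'e, which is exactly what the case distinction in the definition~\eqref{tikappa} of $\t(\kappa)$ encodes. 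So the first step is to isolate the coefficient of $\|u_t\|_1^2$, namely $\tfrac{\rho\alpha\eta^2}{2}-2\mu-\varepsilon^2\ell-\varepsilon^3\eta$, and note that for it to be made positive by small $\varepsilon$ it suffices that $\tfrac{\rho\alpha\eta^2}{2}>2\mu$, i.e. $\rho>4\mu/(\alpha\eta^2)$.

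The heart of the matter is to show that assumption~\eqref{main_condition}, $\eta^2>\t(\kappa)\mu$, is precisely what allows $\rho$ to be picked both large enough for the $\|u_t\|_1^2$ coefficient and small enough to keep the $\theta$-coefficients under control. Concretely, after using Poincar\'e $\|\theta\|^2\le \tfrac{1}{\lambda_1}\|\theta\|_1^2$ to absorb a possibly negative $\rho(\kappa-\alpha/\lambda_1)\|\theta\|^2$ into the $\|\theta\|_1^2$ term, one is left (in the regime $\kappa<\alpha/\lambda_1$) with an effective $\|\theta\|_1^2$-coefficient of the form $2\kappa+\rho(\kappa-\alpha/\lambda_1)/\lambda_1-\rho\kappa^2/\alpha-\tfrac{3\varepsilon\eta}{2}$, which for small $\varepsilon$ is positive as long as $\rho$ is below some explicit bound $\rho_{\max}(\kappa)$; a short computation shows $\rho_{\max}(\kappa)=4\kappa/(\alpha^2\t(\kappa))$ with $\t(\kappa)$ exactly as in~\eqref{tikappa}, and in the other regime $\kappa\ge\alpha/\lambda_1$ one keeps the honest $\|\theta\|^2$-coefficient and gets the simpler $\rho_{\max}=2\alpha/\kappa$, again matching $4\kappa/(\alpha^2\t(\kappa))$. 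Thus the required interval $4\mu/(\alpha\eta^2)<\rho<4\kappa/(\alpha^2\t(\kappa))$ is nonempty precisely when $\mu/(\alpha\eta^2)<\kappa/(\alpha^2\t(\kappa))$, which rearranges to $\eta^2>\t(\kappa)\mu$; pick any such $\rho$ and fix it. (In the critical case $\mu=0$ the lower bound on $\rho$ is $0$, so any small $\rho>0$ works and the condition is just $\eta\neq0$.)

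With $\rho$ now fixed, the second step is routine: every coefficient in~\eqref{exp_decaymah} depends continuously on $\varepsilon$, the ``dangerous'' perturbations $\varepsilon\eta$, $\varepsilon^2\ell$, $\varepsilon^3\eta$ all vanish as $\varepsilon\to0^+$, and the $\varepsilon=0$ values of the four coefficients we care about are, respectively, $0$ (but multiplied by the strictly positive $\tfrac{\gamma}{\alpha}$ inside, and carrying the prefactor $\varepsilon^2/2$), $\varepsilon^2/2$, $\tfrac{\rho\alpha\eta^2}{2}-2\mu>0$, and the strictly positive effective $\|\theta\|_1^2$-coefficient from the previous paragraph. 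Hence there is $\varepsilon_0>0$ so that for all $\varepsilon\in(0,\varepsilon_0)$ the left-hand side of~\eqref{exp_decaymah} dominates
$$
\frac{d}{dt}\mathsf L + c_1\varepsilon^2\|u_t+\alpha u\|_1^2 + c_1\varepsilon^2\|u_{tt}+\alpha u_t\|^2 + c_2\|u_t\|_1^2 + c_3\|\theta\|_1^2 \le 0
$$
for some constants $c_1,c_2,c_3>0$ (with $c_2,c_3$ independent of $\varepsilon$). Finally, using Poincar\'e once more to bound $\|\theta\|^2\le\tfrac1{\lambda_1}\|\theta\|_1^2$ and the definition of $\E$, the sum of these four terms controls a multiple of $\E$; choosing $\omega=\omega(\varepsilon)>0$ equal to, say, $\min\{2c_1\varepsilon^2,\,\ldots\}$ times the equivalence constant gives $\tfrac{d}{dt}\mathsf L+\omega\E\le0$, which is the claim. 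The only genuinely delicate point is the parameter bookkeeping in the first step — verifying that the two $\rho$-bounds collapse exactly to $\t(\kappa)$ as written, including the nonsmooth junction at $\kappa=\alpha/\lambda_1$; everything after that is a compactness/continuity argument in $\varepsilon$ and two applications of Poincar\'e.
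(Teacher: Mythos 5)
Your strategy is the right one and is in essence the paper's own: you seek $\rho$ in an interval whose left endpoint comes from the $\|u_t\|_1^2$ coefficient and whose right endpoint comes from the (Poincar\'e-corrected) $\|\theta\|_1^2$ coefficient, and you observe that the interval is nonempty exactly under \eqref{main_condition}. Indeed, the paper's explicit choice \eqref{epsss} is precisely the mediant of your two bounds $\tfrac{4\mu}{\alpha\eta^2}$ and $\tfrac{4}{\alpha\t(\kappa)}=\tfrac{4\kappa}{\alpha\kappa\t(\kappa)}$, chosen so that the two critical coefficients both equal the same quantity $\sigma>0$; after that, the small-$\varepsilon$ continuity argument and the final Poincar\'e step are identical to yours.

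However, there is an algebra error at exactly the point you flag as the crux. The correct upper bound is $\rho_{\max}(\kappa)=\tfrac{4}{\alpha\t(\kappa)}$, not $\tfrac{4\kappa}{\alpha^2\t(\kappa)}$. In the regime $\kappa\geq\alpha/\lambda_1$ you correctly derive $\rho_{\max}=2\alpha/\kappa$ from $2\kappa-\rho\kappa^2/\alpha>0$, but your claimed formula evaluates there to $\tfrac{4\kappa}{\alpha^2}\cdot\tfrac{\alpha^2}{2\kappa}=2$, which does not match $2\alpha/\kappa$; and in the regime $\kappa<\alpha/\lambda_1$ one has $\tfrac{\kappa^2}{\alpha}-\tfrac{\kappa}{\lambda_1}+\tfrac{\alpha}{\lambda_1^2}=\tfrac{\alpha\kappa\t(\kappa)}{2}$, whence again $\rho<\tfrac{4}{\alpha\t(\kappa)}$. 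This matters because with your formula the nonemptiness condition $\tfrac{4\mu}{\alpha\eta^2}<\tfrac{4\kappa}{\alpha^2\t(\kappa)}$ rearranges to $\eta^2>\tfrac{\alpha}{\kappa}\,\t(\kappa)\mu$, which is \emph{not} hypothesis \eqref{main_condition}; with the corrected $\rho_{\max}=\tfrac{4}{\alpha\t(\kappa)}$ it rearranges to $\eta^2>\t(\kappa)\mu$ exactly as needed. Once this constant is fixed, everything else in your argument (the sign of the $\varepsilon=0$ limits of the coefficients, the absorption of $\|\theta\|^2$ into $\|\theta\|_1^2$, and the final passage from the four good terms to a multiple of $\E$) is correct and complete.
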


\begin{proof}
Recalling the definition of $\t(\kappa)$ given in \eqref{tikappa},
we fix the value of $\rho>0$ to be
\begin{equation}
\label{epsss}
\rho= \frac{4(\mu+\kappa)}{\alpha \eta^2 +\alpha\kappa\t(\kappa)},
\end{equation}
and we set
$$
\sigma= \frac{2\kappa(\eta^2 - \t(\kappa)\mu)}{\eta^2+\kappa\t(\kappa)}.
$$
Observe that \eqref{main_condition} ensures that $\sigma> 0$.
It is convenient to consider two cases.

\medskip
\noindent
${\bf (i)}$  If $\kappa \leq \alpha / \lambda_1$
the coefficient
of $\|\theta\|^2$ in \eqref{exp_decaymah} is negative. So, we apply the Poincar\'e inequality to get
$$
\rho \Big(\kappa - \frac{\alpha}{\lambda_1}\Big)  \|\theta \|^2
+\Big(2 \kappa - \frac{\rho \kappa^2}{\alpha} - \frac{3\varepsilon\eta}{2} \Big) \|\theta\|^2_1
\geq \Big(2 \kappa - \frac{\rho \kappa^2}{\alpha} + \frac{\rho \kappa}{\lambda_1}
- \frac{\rho\alpha}{\lambda_1^2}  - \frac{3\varepsilon\eta}{2} \Big)\|\theta\|_1^2.
$$
By the choice of $\rho$ and the definition of $\sigma$, we see that
$$\frac{\rho\alpha \eta^2}{2} - 2 \mu = 2 \kappa - \frac{\rho \kappa^2}{\alpha} + \frac{\rho \kappa}{\lambda_1}
- \frac{\rho\alpha}{\lambda_1^2}=\sigma.$$
Accordingly, \eqref{exp_decaymah} becomes
\begin{align}
\label{ELLEmezzo}
&\frac{d}{dt} \mathsf L + \frac{\varepsilon^2}{2} \Big( \frac{\gamma}{\alpha} - \varepsilon \eta \Big) \|u_t + \alpha u\|^2_1
+ \frac{\varepsilon^2}{2} \|u_{tt} + \alpha u_t \|^2 \\
&\quad+(\sigma - \varepsilon^2 \ell -\varepsilon^3 \eta) \|u_t\|^2_1
+\frac{1}2(2\sigma - 3\varepsilon\eta) \|\theta\|^2_1\leq0.\notag
\end{align}
It is then apparent that when $\varepsilon$ is sufficiently small, all the coefficients
become positive.
Hence, calling
\begin{equation}
\label{omega_eta}
\omega = \min\Big\{\varepsilon^2\Big(\frac{\gamma}{\alpha} - \varepsilon \eta \Big),\,
\varepsilon^2,\, 2(\sigma - \varepsilon^2 \ell -\varepsilon^3 \eta),\, \lambda_1(2\sigma - 3\varepsilon\eta)\Big\},
\end{equation}
and making a further use of the Poincar\'e inequality, we arrive at the desired conclusion.

\medskip
\noindent
${\bf (ii)}$ If $\kappa > \alpha / \lambda_1$ the coefficient
of $\|\theta\|^2$ in \eqref{exp_decaymah} is positive and can be neglected. With $\rho$ and $\sigma$ as above, we
now have
$$\frac{\rho\alpha \eta^2}{2} - 2 \mu = 2 \kappa - \frac{\rho \kappa^2}{\alpha}=\sigma.$$
So we end up with \eqref{ELLEmezzo}, and we conclude
exactly as in the previous step.
\end{proof}

\subsection{Equivalence of the energy}
The next step is showing that the functional $\mathsf L$ is actually equivalent to the energy $\E$.

\begin{lemma}
\label{LemmaEQ}
Assume that
\begin{equation}
\label{NOR}
\eta^2>\frac{\mu}{\alpha\lambda_1}\qquad\text{and}\qquad
\rho>\frac{2\mu\lambda_1}{\alpha\eta^2\lambda_1-\mu}.
\end{equation}
Then, there exists $c>1$ such that
$$
\frac{1}{c}\, \E \leq \mathsf L \leq c\, \E,
$$
for every $\varepsilon>0$ small.
\end{lemma}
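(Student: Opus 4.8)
The plan is to prove the two-sided estimate by treating $\varepsilon^2\mathsf G$ as a genuinely small perturbation and reducing everything to a bound on $\mathsf W+\rho\mathsf F$. The first observation is that $\mathsf G$ is controlled by the energy \emph{independently of $\varepsilon$}: by the Cauchy--Schwarz and Poincar\'e inequalities, together with the equivalence of $\|\cdot\|_\H$ with the standard product norm on $H^1\times H^1\times H\times H$, one gets $|\mathsf G|\le c_0\,\E$ for a constant $c_0>0$ depending only on the structural parameters. The upper bound is then immediate: discarding the nonpositive term $-\tfrac{\mu}{\alpha}\|u_t\|_1^2$ in $\mathsf W$, estimating the cross term in $\mathsf F$ by $|\l\theta,u_t\r|\le\|\theta\|_{-1}\|u_t\|_1\le \lambda_1^{-1/2}\|\theta\|\,\|u_t\|_1$ and $\|\theta\|_{-1}^2\le\lambda_1^{-1}\|\theta\|^2$, and using $|\mathsf G|\le c_0\E$, each of $\mathsf W$, $\rho\mathsf F$, $\varepsilon^2\mathsf G$ is bounded by a constant times $\E$, uniformly for $\varepsilon\le1$.

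The heart of the matter is the lower bound, and here the strategy is to establish the $\varepsilon$-free estimate $\mathsf W+\rho\mathsf F\ge 2\delta_0\,\E$ for some $\delta_0>0$, so that $\mathsf L\ge 2\delta_0\E-c_0\varepsilon^2\E\ge\delta_0\E$ as soon as $\varepsilon^2\le\delta_0/c_0$. Writing out
$$
\mathsf W+\rho\mathsf F=\tfrac{\gamma}{\alpha}\|u_t+\alpha u\|_1^2+\|u_{tt}+\alpha u_t\|^2+\|\theta\|^2+\Big(\tfrac{\rho\eta^2}{2}-\tfrac{\mu}{\alpha}\Big)\|u_t\|_1^2+\tfrac{\rho}{2}\|\theta\|_{-1}^2+\rho\eta\l\theta,u_t\r,
$$
the first two terms are harmless, and the whole point is to recover a strictly positive multiple of $\|u_t\|_1^2$ (beating $-\tfrac{\mu}{\alpha}\|u_t\|_1^2$) and of $\|\theta\|^2$. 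The device is the \emph{weighted} Young inequality with a free parameter $q>0$,
$$
\rho\eta\l\theta,u_t\r\ \ge\ -\tfrac{\rho(1+q)}{2}\|\theta\|_{-1}^2-\tfrac{\rho\eta^2}{2(1+q)}\|u_t\|_1^2,
$$
followed by Poincar\'e on the resulting $\theta$-deficit, $-\tfrac{\rho q}{2}\|\theta\|_{-1}^2\ge -\tfrac{\rho q}{2\lambda_1}\|\theta\|^2$. This leaves
$$
\mathsf W+\rho\mathsf F\ \ge\ \tfrac{\gamma}{\alpha}\|u_t+\alpha u\|_1^2+\|u_{tt}+\alpha u_t\|^2+\Big(\tfrac{\rho\eta^2 q}{2(1+q)}-\tfrac{\mu}{\alpha}\Big)\|u_t\|_1^2+\Big(1-\tfrac{\rho q}{2\lambda_1}\Big)\|\theta\|^2 .
$$

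It remains to choose $q$ so that both bracketed coefficients are strictly positive, i.e.\ so that
$$
\frac{2\mu}{\alpha\rho\eta^2-2\mu}\ <\ q\ <\ \frac{2\lambda_1}{\rho},
$$
and this is exactly where assumption~\eqref{NOR} enters: the first inequality in~\eqref{NOR} forces $\alpha\rho\eta^2>2\mu$ (so the left endpoint is finite and $\ge0$), while the second inequality, rearranged as $\alpha\rho\eta^2\lambda_1>\mu(\rho+2\lambda_1)$, is precisely the statement that the above interval is nonempty. Fixing such a $q$ gives $\mathsf W+\rho\mathsf F\ge 2\delta_0\E$ with $\delta_0$ the half-minimum of the four coefficients, and the lemma follows; the case $\mu=0$ is automatically included, the admissible interval for $q$ then being simply $(0,2\lambda_1/\rho)$. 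I expect the only delicate point to be this bookkeeping — checking that the two algebraic conditions in~\eqref{NOR} are \emph{exactly} (not merely up to a harmless constant) what is required to open the window for $q$, so that the lemma meshes with the choice of $\rho$ made in Lemma~\ref{ineqL}.
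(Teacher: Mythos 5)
Your proof is correct and follows essentially the same route as the paper: the paper also reduces the lower bound to the quadratic form in $\|u_t\|_1$, $\|\theta\|$, $\|\theta\|_{-1}$ coming from $\mathsf W+\rho\mathsf F$, applies a weighted Young inequality to $\rho\eta\l\theta,u_t\r$ with a free parameter $\nu\in(0,1)$ (your $q$ is just $(1-\nu)/\nu$), and checks that the two inequalities in \eqref{NOR} are exactly the nonemptiness of the admissible window. The only nitpick is attribution: it is the second inequality in \eqref{NOR} (with the first guaranteeing its denominator is positive), not the first alone, that yields $\alpha\rho\eta^2>2\mu$; this does not affect the argument.
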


\begin{proof}
Throughout this proof, $c>1$ will denote a generic constant, depending only on $\rho$ and
the structural parameters
of the problem.
The inequality
$$\mathsf L \leq c\, \E$$
is a straightforward consequence of the Young and Poincar\'e inequalities
(recall that $\varepsilon$ is small).
By the same token, it is also clear that
$$
|\mathsf G|\leq c \,\E.
$$
Therefore, we are left to prove the remaining inequality
$$
\frac{1}{c}\, \E \leq \mathsf W + \rho \mathsf F.
$$
Since
$$
\mathsf W + \rho\mathsf F =
\|u_{tt} + \alpha u_t\|^2 + \frac{\gamma}{\alpha} \|u_t + \alpha u\|^2_1
+ \Big(\frac{\rho \eta^2}{2}-\frac{\mu}{\alpha}\Big) \| u_t \|^2_1
+ \rho \eta \l \theta, u_t \r + \|\theta\|^2  + \frac{\rho}{2} \|\theta\|^2_{-1},
$$
we only need to show that
$$
\Big(\frac{\rho \eta^2}{2}-\frac{\mu}{\alpha}\Big) \| u_t \|^2_1
+ \rho \eta \l \theta, u_t \r + \|\theta\|^2  + \frac{\rho}{2} \|\theta\|^2_{-1}
\geq c \Big[\|u_t\|^2_1+\|\theta\|^2\Big].
$$
Indeed,
$$\l \theta, u_t \r\geq - \frac{\eta \nu}{2} \|u_t\|^2_1 -\frac{1}{2 \eta \nu} \|\theta\|^2_{-1},
$$
for every $\nu\in (0,1)$.
Therefore,
\begin{align*}
&\Big(\frac{\rho \eta^2}{2}-\frac{\mu}{\alpha}\Big) \| u_t \|^2_1
+ \rho \eta \l \theta, u_t \r +\|\theta\|^2+ \frac{\rho}{2} \|\theta\|^2_{-1}\\
&\geq \Big(\frac{\rho\eta^2}{2}(1-\nu)-\frac{\mu}{\alpha}\Big) \| u_t \|^2_1
+\|\theta\|^2 - \frac{\rho}{2\nu}(1-\nu) \|\theta\|^2_{-1}\\
&\geq \Big(\frac{\rho\eta^2}{2}(1-\nu)-\frac{\mu}{\alpha}\Big) \| u_t \|^2_1 +
\Big(1 - \frac{\rho}{2\nu\lambda_1}(1-\nu) \Big)\|\theta\|^2.
\end{align*}
The conclusion follows if we find $\nu\in(0,1)$ for which the two coefficients in the right-hand side are positive,
which amounts to requiring, besides $\nu>0$,
$$
1-\frac{2\lambda_1}{2\lambda_1+\rho}<\nu<1-\frac{2\mu}{\rho\alpha\eta^2}.
$$
This is possible since
$$
\frac{\mu}{\rho\alpha\eta^2} <  \frac{\lambda_1}{2\lambda_1 + \rho}<\frac12,
$$
and the first inequality is nothing but \eqref{NOR}.
\end{proof}

\subsection{Conclusion of the proof}
We choose $\rho$ as in \eqref{epsss}, so that
Lemma~\ref{ineqL} applies.
Besides, we know from \eqref{main_condition} that
$$\rho > \frac{4\mu}{\alpha \eta^2},$$
and
$$\eta^2>\min_{\kappa>0}\,\t(\kappa)\mu=\frac{2\mu}{\lambda_1\alpha}.$$
Combining the two inequalities, we obtain
$$\rho(\alpha\eta^2\lambda_1-\mu)>4\mu\lambda_1-\frac{4\mu^2}{\alpha\eta^2}\geq 2\mu\lambda_1,$$
meaning that \eqref{NOR} holds true.
Therefore, Lemma~\ref{LemmaEQ} applies as well.
Accordingly, fixing $\varepsilon>0$ small enough,
and redefining $\omega$ up to the multiplicative constant $c$, we are led to
$$
\frac{d}{dt}\mathsf L + \omega \mathsf L \leq 0.
$$
The Gronwall lemma then gives
$$\mathsf L(t)\leq \mathsf L(0)e^{-\omega t}.$$
Since ${\mathsf L}\sim\E$, we finally arrive at the sought energy inequality~\eqref{ExpDecayEstimate}.
\qed

\section{Optimal Decay Rate}
\label{ODR}

\noindent
Once the exponential stability of the system is established,
another relevant issue concerns with the exponential decay rate $\omega_\star $ of the energy $\E$,
introduced in Definition~\ref{defexpoptimal}.
Indeed, one might want to look for the value of $\eta$ that
maximizes $\omega_\star $, once all the other parameters are fixed.
We already know from Section~\ref{EBU} that when $\eta\to 0$ exponential stability is lost.
Let us see what happens when $|\eta|\to \infty$, where exponential stability
certainly occurs in view of Theorem~\ref{ThmDECAY}.

\begin{proposition}
\label{propdet}
Let $\alpha,\beta,\gamma,\kappa$ be fixed. Then the exponential
decay rate  of the energy goes to zero as $|\eta|\to\infty$.
\end{proposition}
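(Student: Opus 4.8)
The plan is to reduce Proposition~\ref{propdet} to a soft perturbation analysis of the characteristic polynomial $p_\eta$ from Section~\ref{EBU}. As there, I would first assume that $A$ possesses an eigenvalue $\lambda>0$, with eigenvector $w$. For every root $z_\eta\in\mathbb{C}$ of $p_\eta$, the linear ODE system~\eqref{ODE} has a nontrivial solution $\phi(t)=\phi_0 e^{z_\eta t}$, $\psi(t)=\psi_0 e^{z_\eta t}$, and hence~\eqref{main_sysA} has a solution $u(t)=\phi(t)w$, $\theta(t)=\psi(t)w$ (real, after taking real parts when $z_\eta\notin\R$), which lies in $\H$ since $w$ belongs to every $H^\sigma$, and whose energy satisfies $\E(t)\asymp e^{2\Re z_\eta\,t}$. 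Feeding this particular trajectory into~\eqref{ExpDecayEstimate} forces $\omega\leq -2\Re z_\eta$, whence $\omega_\star\leq -2\Re z_\eta$ for any root of $p_\eta$. It therefore suffices to exhibit, for $|\eta|$ large, a root $z_\eta$ of $p_\eta$ with $\Re z_\eta\to 0$ as $|\eta|\to\infty$.

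The key step is to isolate the dependence on $\eta$ by writing
$$
p_\eta(z)=P(z)+\eta^2\lambda^2\,z(z+\alpha),\qquad P(z)=z^4+(\alpha+\kappa\lambda)z^3+(\beta\lambda+\alpha\kappa\lambda)z^2+(\gamma\lambda+\beta\kappa\lambda^2)z+\gamma\kappa\lambda^2,
$$
so that $P$ does not depend on $\eta$. Dividing by $\eta^2\lambda^2$ and letting $|\eta|\to\infty$, the coefficients of $p_\eta/(\eta^2\lambda^2)$ converge to those of the quadratic $z(z+\alpha)$, whose roots $0$ and $-\alpha$ are simple. By the continuous dependence of the roots of a polynomial on its coefficients (the very tool already used in Proposition~\ref{PROPOPRIMA}, see~\cite{US}), two of the four roots of $p_\eta$ converge respectively to $0$ and to $-\alpha$, the remaining two escaping to infinity. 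Denoting by $z_\eta$ the root that converges to $0$, we conclude $\Re z_\eta\to 0$. Since $\eta^2>\t(\kappa)\mu$ once $|\eta|$ is large, Theorem~\ref{ThmDECAY} gives $\omega_\star>0$, and the squeeze $0<\omega_\star\leq -2\Re z_\eta\to 0$ closes the argument. When $A$ has no eigenvalue, the same reasoning applies after replacing these exponential solutions by a spectral analysis of the generator of $S(t)$: for each $\lambda$ in the spectrum of $A$, any root of $p_\eta(z)=0$ belongs to the spectrum of the generator, so the existence of a root near $0$ yields, exactly as above, $\omega_\star\leq -2\Re z_\eta\to 0$ (compare the Remark after Proposition~\ref{PROPOPRIMA}).

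If a quantitative rate is wanted, one may push the perturbation one order further: since $P(0)=\gamma\kappa\lambda^2\neq 0$, balancing the constant and linear terms of $p_\eta$ near $z=0$ gives $z_\eta\simeq -\gamma\kappa\lambda/(\alpha\eta^2\lambda+\gamma+\beta\kappa\lambda)$, so that $\Re z_\eta=O(\eta^{-2})$ and $z_\eta$ is, to leading order, real and negative, in accordance with the stability ensured by Theorem~\ref{ThmDECAY}. I do not expect any genuine obstacle here, the statement being essentially a continuity-of-roots fact; the only points to treat with a little care are the rigorous justification of $\omega_\star\leq -2\Re z_\eta$ (membership of the exponential solution in $\H$, and the passage to real parts for complex $z_\eta$) and, in the eigenvalue-free case, the fact that what controls $\omega_\star$ is the growth bound of the generator rather than its spectral bound, which is nonetheless all one needs for an upper estimate.
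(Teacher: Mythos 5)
Your argument is correct, and it follows the paper's overall strategy (reduce to single-mode solutions and show that the characteristic polynomial $p_\eta$ acquires a root approaching the imaginary axis as $|\eta|\to\infty$), but the device you use to locate that root is genuinely different. The paper evaluates $p_\eta$ at the explicit point $-\varepsilon_\eta$ with $\varepsilon_\eta=\frac{\gamma\kappa\lambda^2+1}{\alpha\lambda^2}\eta^{-2}$, checks that $p_\eta(-\varepsilon_\eta)\to -1$ while $p_\eta(0)=\gamma\kappa\lambda^2>0$, and concludes by the intermediate value theorem that there is a \emph{negative real} root $-x_\eta\in(-\varepsilon_\eta,0)$; this is self-contained, immediately quantitative ($x_\eta=O(\eta^{-2})$), and needs no appeal to Theorem~\ref{ThmDECAY} to fix the sign of the root. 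You instead isolate the $\eta$-dependence via $p_\eta(z)=P(z)+\eta^2\lambda^2 z(z+\alpha)$, rescale, and invoke continuity of roots: this is softer and arguably more illuminating (it also tells you where the second finite root goes, namely to $-\alpha$, and that the other two escape to infinity), but by itself it only gives $z_\eta\to 0$ without a sign, so you must — as you correctly do — either invoke Theorem~\ref{ThmDECAY} to rule out $\Re z_\eta\geq 0$ for large $|\eta|$, or run your second-order balance $z_\eta\simeq-\gamma\kappa\lambda/(\alpha\eta^2\lambda+\gamma+\beta\kappa\lambda)$, which recovers exactly the paper's $O(\eta^{-2})$ rate. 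Two minor points worth a line each if you write this up: the root converging to $0$ is automatically real for large $|\eta|$ (since $0$ is a \emph{simple} root of the limit quadratic and non-real roots of the real polynomial $p_\eta$ come in conjugate pairs), which spares you the fuss about taking real parts of complex exponential solutions; and the degree of $p_\eta/(\eta^2\lambda^2)$ drops from $4$ to $2$ in the limit, so you need the version of the continuity-of-roots theorem that allows vanishing leading coefficients (the reference \cite{US} does cover this, with the excess roots tending to infinity).
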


\begin{proof}
As in Proposition~\ref{PROPOPRIMA}, we assume that $A$ has at least one eigenvalue $\lambda>0$,
but again this simplifying assumption is not really necessary.
Looking for single-mode solutions to~\eqref{main_sysA},
we boil down once more to system~\eqref{ODE}. Accordingly,
all we need to show is that the characteristic polynomial $p_\eta$
of Section~\ref{EBU} has at least a root whose real part tends to zero
as $|\eta|\to\infty$. Note that
$$p_\eta(0)=\gamma \kappa \lambda^2>0.
$$
Besides, choosing
$$\varepsilon_\eta=\frac{\gamma\kappa\lambda^2+1}{\alpha\lambda^2}\,\frac1{\eta^2},$$
it is readily seen that
$$\lim_{|\eta|\to\infty}p_\eta(-\varepsilon_\eta)= -1.
$$
Thus, for $|\eta|$ large enough, $p_\eta(-\varepsilon_\eta)$ becomes negative, implying that
$p_\eta$ has a (negative) real root
$-x_\eta\in (-\varepsilon_\eta,0)$, where $\varepsilon_\eta\to 0$. This means that the exponential decay rate
is less than or equal to $2x_\eta\to 0$.
\end{proof}

\begin{remark}
We should observe that the conclusions of Proposition~\ref{propdet} hold no matter
if $\mu$ is positive or not. Thus, when $|\eta|\to\infty$ the decay rate
of the energy deteriorates also in the subcritical case $\mu<0$ studied in~\cite{BUR}.
\end{remark}

Accordingly, the exponential
decay rate, which is easily seen to be continuous in $\eta$, attains its maximum when $\eta=\pm\eta_\star $,
for some $\eta_\star>0$,
being clear that the picture depends only on the modulus of $\eta$.
This complies with our calculations:
when $\alpha,\beta,\gamma,\kappa$ are fixed, the best decay rate $\omega_b=\omega_b(\eta)$
predicted by Theorem~\ref{ThmDECAY} is obtained, up to a multiplicative constant,
by maximizing \eqref{omega_eta} with respect to $\varepsilon>0$.
In turn, the function $\eta\mapsto \omega_b(\eta)$ reaches its maximum for some $\eta=\pm\eta_b$,
with
$$\sqrt{\t(\kappa)\mu}\,<\eta_b<\infty,$$
although it is not easy at all to compute such an $\eta_b$ explicitly.
Nevertheless, this does not solve the problem of finding the actual value $\omega_\star $ of the
exponential decay rate (for any fixed $\eta$ large enough).
First, because our estimates provide only
sufficient conditions. But, more importantly, because it is generally impossible to
establish the exponential decay rate of a linear system via energy estimates. This fact will be discussed
in detail in the final Appendix.

Actually, there is a last interesting question to be addressed, namely, to see
what happens in the limit situations when the thermal conductivity $\kappa$ becomes
either very small or very large.
More precisely, for fixed $\alpha,\beta,\gamma$, let us consider the exponential decay rate
$$\omega_\star =\omega_\star (\kappa,\eta).$$
This function is defined for every $\kappa>0$ and every $\eta$ with $|\eta|$ large enough.
Then, for every $\kappa>0$, we set
$$\omega_\kappa=\max_{\eta}\,\omega_\star (\kappa,\eta).$$
In other words, $\omega_\kappa$ is the best possible decay rate for a fixed $\kappa$, which is
obtained by suitably modulating the coupling constant $\eta$.
It is reasonable to expect that $\omega_\kappa\to 0$ as $\kappa\to 0$.
This is indeed true, but not so impressive from the physical viewpoint (and we omit the proof). On the contrary,
it would seem reasonable to bet on $\omega_\kappa\to \infty$ as $\kappa\to \infty$.
The next proposition tells that you would lose.

\begin{proposition}
Let $\alpha,\beta,\gamma$ be fixed. Then there exists $\xi=\xi(\alpha,\beta,\gamma)>0$
such that
$$\limsup_{\kappa\to \infty}\,\omega_\kappa\leq 2\xi.$$
\end{proposition}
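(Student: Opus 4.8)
The plan is to reduce, exactly as in Proposition~\ref{propdet}, to single-mode solutions and to the characteristic polynomial $p_\eta$ of Section~\ref{EBU}, which I would rewrite in the form
$$
p_\eta(z)=(z+\kappa\lambda)\,q_0(z)+\eta^2\lambda^2\,z(z+\alpha)
$$
isolating the thermal factor; here $\lambda>0$ is a fixed eigenvalue of $A$ (if $A$ has no eigenvalue one argues via the spectrum of the generator of $S(t)$, exactly as in the Remark after Proposition~\ref{PROPOPRIMA}, with no change). Since the roots of $p_\eta$ lie in the spectrum of the generator, which never exceeds its growth bound, for every $\eta$ one has
$$
\omega_\star(\kappa,\eta)\le -2\,\max\{\Re z:\,p_\eta(z)=0\}.
$$
So it suffices to produce a constant $\xi>0$, depending only on $\alpha,\beta,\gamma$ (and the fixed $\lambda$), such that for all large $\kappa$ and \emph{every} $\eta$ the polynomial $p_\eta$ possesses a root with real part at least $-\xi-o(1)$, the $o(1)$ being uniform in $\eta$ as $\kappa\to\infty$; maximizing over $\eta$ then yields $\omega_\kappa\le 2\xi+o(1)$, hence the assertion.

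The scaling is dictated by the physical heuristic quoted earlier: for $\kappa$ large the thermal variable of~\eqref{ODE} is slaved, $\psi\simeq -\tfrac{\eta}{\kappa}(\phi''+\alpha\phi')$, and the mechanical equation reduces to a third-order one depending on $\eta,\kappa$ only through $s:=\eta^2/\kappa$. Accordingly I would set $\eta^2=s\kappa$; dividing the factorization above by $\kappa$ gives at once the coefficientwise limit
$$
\tfrac1\kappa\,p_\eta(z)\;\longrightarrow\;\lambda\big(q_0(z)+s\lambda\,z(z+\alpha)\big)=\lambda\,c_s(z),\qquad
c_s(z)=z^3+(\alpha+s\lambda)z^2+(\beta+\alpha s)\lambda z+\gamma\lambda,
$$
uniformly for $s$ in compact sets. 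By continuous dependence of the roots of a polynomial on its coefficients (as used in Proposition~\ref{PROPOPRIMA}), three of the four roots of $p_\eta$ converge, uniformly for $s$ in compact subsets of $[0,\infty)$, to the three roots of $c_s$, while the fourth escapes to $-\infty$. Denoting by $m(s)$ the largest real part of a root of $c_s$, this gives $\max\{\Re z:p_\eta(z)=0\}\to m(s)$ as $\kappa\to\infty$, uniformly on compact $s$-ranges.

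Next I would analyse $m(s)$ via Routh--Hurwitz applied to $c_s$: the conditions on the outer coefficients are automatic, while the remaining one reduces to $\alpha\lambda s^2+(\alpha^2+\beta\lambda)s-\mu>0$, which, since $\mu\ge 0$, holds exactly for $s>s_0$, where $s_0\ge 0$ is the unique nonnegative root of that quadratic (so $s_0=0$ if $\mu=0$). At $s=s_0$ the cubic factors as $(z+\alpha+s_0\lambda)(z^2+(\beta+\alpha s_0)\lambda)$, hence has a purely imaginary pair and $m(s_0)=0$; and inspecting the roots for $s\to\infty$ one sees that two of them tend to $0$ and $-\alpha$, so $m(s)\to 0^-$ at that end as well. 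Being continuous and strictly negative on $(s_0,\infty)$ with limits $0$ at both endpoints, $m$ attains a minimum there, and I set $\xi:=-\min_{s>s_0}m(s)>0$. It remains to dispose of very large $s$: for $s$ beyond a fixed structural threshold $S_\star$ I would reuse the sign-change argument of Proposition~\ref{propdet}, noting that $p_\eta(0)=\gamma\kappa\lambda^2>0$ while the factorization above makes $p_\eta(-C/s)<0$ with $C$ structural, \emph{uniformly in $\kappa$}; this produces a real root of $p_\eta$ in $(-C/s,0)$, so the corresponding decay rate stays below $2\xi$ once $S_\star$ is large enough. Splitting $s\ge 0$ into the compact piece $[0,S_\star]$ (governed by the previous paragraph) and $[S_\star,\infty)$ then gives $\omega_\kappa\le 2\xi+o(1)$ and the result.

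The heart of the matter, and the step I expect to be the most delicate, is the uniformity in $\eta$: root continuity is genuinely uniform only on compact ranges of $s=\eta^2/\kappa$, so the regime $\eta^2\gg\kappa$ — where the slaving degenerates and both parameters blow up — must be treated by hand, and this is the troublesome case, whereas the opposite extreme $s\downarrow s_0$ is harmless because there $m(s)\to 0$. A minor loose end is the situation in which $A$ has no eigenvalue, which is handled in terms of the spectral bound of the generator, exactly as in the Remark following Proposition~\ref{PROPOPRIMA}.
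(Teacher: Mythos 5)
Your argument is correct, but it takes a genuinely different route from the paper's. The paper works directly with the quartic $p$: it splits $p=f+g$ with $g(z)=z^4+\alpha z^3+\beta\lambda z^2+\gamma\lambda z$ independent of $\kappa$ and $\eta$, and exhibits an explicit test point ($-\alpha/2$ when $\kappa/\eta^2\le r$, and $-\xi$ when $\kappa/\eta^2>r$, with $r$ and $\xi$ given by closed formulas in the structural parameters) at which $f\le -c\kappa$; since $g$ stays bounded there, $p$ changes sign on $(-\xi,0)$ for all large $\kappa$, uniformly in $\eta$, producing the required real root in one stroke for every coupling. You instead rescale by $s=\eta^2/\kappa$, identify the limiting ``slaved'' cubic $c_s$, define $\xi$ variationally as $-\min_{s}m(s)$, and patch the non-compact tail $s\ge S_\star$ with a sign-change argument in the spirit of Proposition~\ref{propdet} --- note that this tail computation is essentially the paper's case $\kappa/\eta^2\le r$ in disguise, your threshold $S_\star$ playing the role of the paper's $1/r$. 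What your approach buys is conceptual transparency (it explains why the decay rate saturates: for large $\kappa$ the dynamics of \eqref{ODE} collapses onto a reduced cubic depending on $\eta$ and $\kappa$ only through $s$) and, in principle, the sharp asymptotic constant, since with a matching lower bound one would obtain $\limsup_{\kappa\to\infty}\omega_\kappa=2\xi$ for your $\xi$; what it costs is an only implicitly defined $\xi$ and heavier bookkeeping (uniformity of the root convergence on compact $s$-ranges, the escaping fourth root, the splitting of the $s$-axis), all of which you correctly flag and all of which do go through. Two minor remarks: the reduction to single modes and the bound $\omega_\star\le -2\max\Re z$ are exactly as in Proposition~\ref{propdet}, so that part coincides with the paper; and both your $\xi$ and the paper's depend on the chosen $\lambda$ as well as on $\alpha,\beta,\gamma$, so you are no worse off than the original statement in that respect.
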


\begin{proof}
We parallel the proof of Proposition~\ref{propdet}, considering
the characteristic polynomial of Section~\ref{EBU},
that here we simply call $p$, which fulfills $p(0)=\gamma \kappa \lambda^2>0$.
We will establish the claim by showing that there exists $0<\zeta\leq \xi$ such that
$$p(-\zeta)<0,$$
for every $\kappa$ large enough. This means that $p$ has a  real root
in $(-\xi,0)$, implying that $\omega_\kappa\leq 2\xi$.
To this aim, we introduce the positive number (as $\mu\geq 0$)
$$
r = \frac{\alpha^2 \lambda}{\alpha^3 - 4\alpha\beta\lambda + 8\gamma\lambda},
$$
and we call $\xi$ the unique real solution to the equation
$$
-\xi^3 + \Big(\alpha + \frac{\lambda}{r} \Big)\xi^2 +\gamma\lambda= 0.
$$
It is straightforward to check that $\xi  > \alpha$.
Then, we split $p$ into the sum
$$
p(z) = f(z) + g(z),
$$
where
\begin{align*}
f(z)&=\kappa \lambda z^3 + (\alpha \kappa \lambda +
 \eta^2 \lambda^2) z^2 + (\beta \kappa \lambda^2 + \alpha \eta^2 \lambda^2 ) z + \gamma \kappa \lambda^2, \\
g(z)&= z^4 + \alpha z^3 + \beta \lambda z^2 + \gamma \lambda z.
\end{align*}
Note that $g$ is independent of $\kappa$ and $\eta$.
If $\kappa/\eta^2 \leq r$, we set $\zeta=\alpha/2$ to obtain
$$
f(-\zeta) =\frac{\alpha^2\lambda^2\eta^2}{8} \Big(-2+ \frac{\kappa}{\eta^2r} \Big)
\leq - \frac{\alpha^2\lambda^2\eta^2}{8}\leq - \frac{\alpha^2\lambda^2\kappa}{8r}.
$$
Instead, if $\kappa/\eta^2 > r$, we set $\zeta=\xi$ and, from the
very definition of $\xi$, we get
$$
f(-\zeta ) = -\kappa\lambda^2 \Big(\beta \xi +\frac{\kappa-r\eta^2}{r\kappa}\xi ^2
+\frac{\alpha \eta^2}{\kappa} \xi  \Big) \leq -\beta \xi \lambda^2 \kappa.
$$
In both cases, we conclude that
$p(-\zeta) =f(-\zeta) + g(-\zeta)\to -\infty$
as $\kappa\to\infty$.
\end{proof}

\begin{remark}
Actually, the same is true also
in the subcritical regime $\mu <0$.
Just note that
$p(-\alpha) = \lambda \mu (\kappa \lambda - \alpha)\to -\infty$
as $\kappa\to\infty$.
\end{remark}

\section{Comparison with Numerical Results}

\noindent
The aim of this section is to verify to what extent our theoretical findings
are consistent with the numerical simulations.
This is of some importance, since our main Theorem~\ref{ThmDECAY} establishes only sufficient conditions
on the exponential decay.
As a model, we consider the one-dimensional version of~\eqref{main_sysA}, i.e., where the underlying
Hilbert space $H$ is simply $\R$ and $A=1$, implying in turn $\lambda_1=1$.
As far as the MGT parameters are concerned, we take
$$\alpha=2,\quad \beta=1,\quad \gamma=3,
$$
meaning that we sit in the supercritical regime with $\mu=1$. But, in fact, nothing really changes
in the forthcoming analysis by selecting different parameters complying with $\mu>0$.
Accordingly, our system reads
$$
\begin{cases}
u''' + 2 u'' + u' + 3 u =\eta \theta, \\
\noalign{\vskip1mm}
\theta' + \kappa \theta =- \eta u'' - 2 \eta u',
\end{cases}
$$
where the \emph{prime} stands time-derivative,
and whose characteristic polynomial is
$$
p(z)= z^4 + (2 + \kappa)z^3 + (1 + 2\kappa +\eta^2)z^2
+(3+\kappa + 2\eta^2 ) z + 3\kappa .
$$

\subsection{The stability threshold} The first task is comparing the theoretical stability
threshold $\t(\kappa)$ provided by~\eqref{tikappa} with the actual stability threshold
$\t_\star (\kappa)$, which certainly exists in the light of Proposition~\ref{PROPOPRIMA}.
With the aid of Mathematica{\tiny\texttrademark}
we compute $\t_\star (\kappa)$ as
$$
\t_\star (\kappa)=\sup\big\{\eta^2:\,p\text{ has a root with positive real part}\big\},
$$
and we compare the two thresholds in the following figure.

\bigskip
\soglianumerica

\noindent
As expected, $\t(\kappa) > \t_\star (\kappa)$ for every $\kappa$. Nonetheless,
both functions have a linear growth for $\kappa$ large,
and they also exhibit the same behavior when $\kappa\to 0$.
This is quite clear from the graph of the ratio $\t/\t_\star$.

\bigskip
\ratio

\bigskip
\bigskip
\begin{remark}
Note that $\t(\kappa)$ does not really depend on the particular operator $A$, but only on
the minimum $\lambda_1$ of its spectrum.
However, we might add that, in the one-dimensional case, our theoretical results
are in fact better and closer to the numerical simulations, since in the estimates (precisely those of
Lemma~\ref{ineqL}) we can lean on the fact that the 0-norm and the 1-norm coincide.
Indeed, we actually have
$$\t(\kappa)=\frac{2\kappa}{\alpha^2}\bigg(1-\frac{\alpha}{\kappa\lambda_1}
+\frac{\alpha^2}{\kappa^2\lambda_1^2}\bigg),\quad\forall\kappa>0.$$
\end{remark}

\subsection{Optimal decay}
As already mentioned, the value $\omega_b$ found in Theorem~\ref{ThmDECAY},
is certainly worse than the actual decay rate $\omega_\star $,
the latter being easily computed from the roots of $p$.
Nonetheless, their shapes as functions of $\eta$ are similar,
and the maximum points are pretty close.
We perform the numerical analysis for $\kappa=2$,
and we recall that, up to a multiplicative constant,
$\omega_b$ is computed by means of~\eqref{omega_eta},
which for this particular choice of the parameters reads
$$
\omega =\omega(\eta)= \min\Big\{\varepsilon^2\Big(\frac32 - \varepsilon \eta \Big),\,
\varepsilon^2,\, 2\sigma - \frac{133}{6}\varepsilon^2
 - 2\varepsilon^3 \eta,\, 2\sigma - 3\varepsilon\eta\Big\}.
$$
Accordingly,
$$\omega_b=\omega_b(\eta)=\sup_{\varepsilon>0}\omega(\eta).$$

\bigskip
\doppio

\noindent
Note that $\eta_b \approx 1.86$ whereas
$\eta_\star\approx 2.15$.

\section*{Appendix:\\ Exponential Decay Rate and Energy Estimates}
\theoremstyle{definition}
\newtheorem{remarkAPP}{Remark}[section]
\renewcommand{\theremarkAPP}{A.\arabic{remarkAPP}}
\setcounter{equation}{0}
\setcounter{subsection}{0}
\renewcommand{\theequation}{A.\arabic{equation}}

\noindent
In general, for a given linear differential system, it is impossible
to attain the exponential decay rate (in the sense of Definition~\ref{defexpoptimal})
via energy estimates,
except in the trivial situation
where the energy inequality~\eqref{ExpDecayEstimate} holds for $M=1$.
To show this fact, we look at the simplest possible example, namely,
the semigroup generated by the second-order differential equation
\begin{equation}
\label{oscillator}
x''+ x'+ x= 0,
\end{equation}
modeling the position $x = x(t) \in \R$ of an oscillator
subject to dynamical friction.
Setting $y=x'$, the total energy of \eqref{oscillator}, which is the sum of
the elastic potential and the kinetic energies, reads
$$
\mathsf e = \frac12 \big[x^2 +y^2\big].
$$
The exponential decay rate of $\mathsf e$
is completely determined by the complex roots of the characteristic polynomial
$$
p(\lambda) = \lambda^2 + \lambda + 1.
$$
Since the (conjugate) roots have both real part equal to
$-1/2$, we deduce the exponential decay rate
$\omega_\star =1$.
Let us see now what happens if we try to attain the exponential decay
via energy estimates.
For the case under investigation, one multiplies equation~\eqref{oscillator}
by $y + \varepsilon x$, for
$\varepsilon \in (0,1)$. Incidentally, such a multiplier
is the classical one used in the asymptotic analysis
of hyperbolic ODEs and PDEs.
Then, straightforward computations entail the differential identity
\begin{equation}
\label{eq_oscillator}
\frac{d}{dt} \mathsf g + \mathsf f=0,
\end{equation}
having set
$$\mathsf g=(1+\varepsilon)x^2 + y^2 + 2\varepsilon x y
\qquad\text{and}\qquad \mathsf f= 2\varepsilon x^2 + 2(1-\varepsilon) y^2.
$$
Both $\mathsf g$
and $\mathsf f$ are equivalent to the energy $\mathsf e$, due to the fact that
$\varepsilon \in (0,1)$.
At this point, the (only possible) strategy is showing that
\begin{equation}
\label{ineq_fL}
\omega \mathsf g\leq \mathsf f,
\end{equation}
for some $\omega >0$. In which case, one infers from \eqref{eq_oscillator} the differential inequality
$$\frac{d}{dt} \mathsf g +\omega\mathsf g\leq 0,$$
and an application of the Gronwall lemma yields
$$\mathsf g(t)\leq \mathsf g(0)e^{-\omega t}.$$ From the equivalence
$\mathsf g\sim\mathsf e$,
we conclude that $\mathsf e$ decays at an exponential rate $\omega$ as well.
In fact, we are looking for the best possible $\omega$ that can be obtained by means of this procedure,
which coincides with the largest  $\omega$ such that \eqref{ineq_fL} holds or, equivalently,
with the largest $\omega$ such that the inequality
$$
(2\varepsilon - \omega-\omega\varepsilon ) t^2 - 2 \omega \varepsilon t + 2-2\varepsilon - \omega \geq 0
$$
is verified for all $t\in\R$.
This translates into the conditions
\begin{equation} 	
\label{const}
\begin{dcases}
2\varepsilon - \omega-\omega\varepsilon > 0, \\
(2 \omega \varepsilon)^2 - 4 (2\varepsilon - \omega-\omega\varepsilon )(2-2\varepsilon - \omega) \leq 0.
\end{dcases}
\end{equation}
We have fallen into a simple optimization problem in two variables:
maximize the function
$$
F(\varepsilon, \omega) = \omega
$$
on the admissible region
$$
\mathfrak A = \big \{(\varepsilon, \omega) \in (0,1)\times(0,1]\,\text{ such that \eqref{const} holds}\big \}.
$$
The maximum of the (linear) extension of $F$ on the closure of $\mathfrak A$
is attained on the boundary. Since equality in the first constraint of \eqref{const} leads to a minimum,
$F$ assumes its maximum when equality holds
in the second constraint, yielding
$$
\omega = 1 - \sqrt{\frac{1-3\varepsilon+3\varepsilon^2}{1+\varepsilon-\varepsilon^2}}.
$$
By elementary computations, the latter quantity is maximized when
$\varepsilon = 1/2$, where $\omega =\omega_b$ with
$$\omega_{b}= 1 - \frac{\sqrt 5}{5}.$$
Since the pair $(1/2,\omega_b)\in\mathfrak A$, the problem is solved.
Summarizing, the best possible decay obtained through this method
is exactly
$\omega_b\approx 0.55$, with a sizable gap compared to the
actual decay rate $\omega_\star =1$.

\begin{remarkAPP}
As we mentioned above, for a linear semigroup $S(t)$ acting on a Hilbert space $\H$,
the exponential decay rate is attained
via energy estimates when~\eqref{ExpDecayEstimate} is satisfied with $M=1$.
This occurs if and only the inequality
$$\l {\mathbb A} u,u\r_\H\leq -\frac{\omega}2\|u\|_\H^2$$
holds true for all vectors $u$ in the domain of ${\mathbb A}$,
being ${\mathbb A}$ the infinitesimal generator
of $S(t)$.
Certainly this is not the case for the system~\eqref{main_sysA} of this paper, and not even for
the much simpler equation~\eqref{oscillator}.
It is however true for the semigroup generated by the linear parabolic equation
$$u_t+A u=0,$$
with $A$ as in Section~\ref{SFSN}.
\end{remarkAPP}

\subsection*{Acknowledgments}
We thank the anonymous Referee for useful suggestions and comments.

\bigskip

\bigskip


\begin{thebibliography}{00}

\bibitem{R1}
{\au F. Alabau-Boussouira, Z. Wang, L. Yu},
{\ti A one-step optimal energy decay formula for indirectly nonlinearly damped hyperbolic systems coupled by velocities},
{\jou ESAIM Control Optim.\ Calc.\ Var.}
\no{23}{721--749}{2017}

\bibitem{BUR}
{\au M.S. Alves, C. Buriol, M.V. Ferreira, J. E. Mu\~noz Rivera, M. Sep\'ulveda, O. Vera},
{\ti Asymptotic behaviour for the vibrations modeled by the standard linear
solid model with a thermal effect},
{\jou J.\ Math.\ Anal.\ Appl.}
\no{399}{472--479}{2013}

\bibitem{R2}
{\au K. Ammari, S. Nicaise},
{\ti Stabilization of a transmission wave/plate equation},
{\jou J.\ Differential Equations}
\no{249}{707--727}{2010}

\bibitem{R3}
{\au G. Avalos, I. Lasiecka},
{\ti Exponential stability of a thermoelastic system without mechanical dissipation},
{\jou Rend.\ Istit.\ Mat.\ Univ.\ Trieste}
\no{28}{1--28}{1997}

\bibitem{R4}
{\au G. Avalos, I. Lasiecka},
{\ti The strong stability of a semigroup arising from a coupled hyperbolic/parabolic system},
{\jou Semigroup Forum}
\no{57}{278--292}{1998}

\bibitem{R5}
{\au G. Avalos, I. Lasiecka, R. Triggiani},
{\ti Heat-wave interaction in 2-3 dimensions: optimal rational decay rate},
{\jou J.\ Math.\ Anal.\ Appl.}
\no{437}{782--815}{2016}

\bibitem{BU1}
{\au F. Bucci, I. Lasiecka},
{\ti Feedback control of the acoustic pressure in ultrasonic wave propagation},
{\jou Optimization}
\no{68}{1811--1854}{2019}

\bibitem{BU2}
{\au F. Bucci, L. Pandolfi},
{\ti On the regularity of solutions to the Moore-Gibson-Thompson equation: a perspective via wave equations with memory},
{\jou  J.\ Evol.\ Equ.}
\no{20}{837--867}{2020}

\bibitem{CLP}
{\au M. Conti, L. Liverani, V. Pata},
{\ti A note on the energy transfer in coupled differential systems},
{\jou Commun.\ Pure Appl.\ Anal.}, to appear.

\bibitem{CPPQ}
{\au M. Conti, V. Pata, M. Pellicer, R. Quintanilla},
{\ti On the analyticity of the MGT-viscoelastic plate with heat conduction},
{\jou J.\ Differential Equations}
\no{269}{7862--7880}{2020}

\bibitem{CPPQ2}
{\au M. Conti, V. Pata, M. Pellicer, R. Quintanilla},
{\ti A new approach to MGT-thermoviscoelasticity},
{\jou Discrete Contin.\ Dyn.\ Syst.},
to appear.

\bibitem{R6}
{\au M. Conti, V. Pata, R. Quintanilla},
{\ti Thermoelasticity of Moore-Gibson-Thompson type with history dependence in the temperature},
{\jou Asymptot.\ Anal.}
\no{120}{1--21}{2020}

\bibitem{CZW}
{\au R.F. Curtain, H. Zwart},
{\bk An introduction to infinite-dimensional linear systems theory},
\eds{Springer-Verlag}{New York}{1995}

\bibitem{DAC}
{\au B. D'Acunto, A. D'Anna, P. Renno},
{\ti On the motion of a viscoelastic solid in presence of a rigid wall},
{\jou Z.\ Angew.\ Math.\ Phys.}
\no{34}{421--438}{1983}

\bibitem{DLP}
{\au F. Dell'Oro, I. Lasiecka, V. Pata},
{\ti The Moore-Gibson-Thompson equation with memory in the critical case},
{\jou J.\ Differential Equations}
\no{261}{4188--4222}{2016}

\bibitem{R7}
{\au F. Dell'Oro, V. Pata},
{\ti On the stability of Timoshenko systems with Gurtin-Pipkin thermal law},
{\jou J.\ Differential Equations}
\no{257}{523--548}{2014}

\bibitem{DPMGT}
{\au F. Dell'Oro, V. Pata},
{\ti On the Moore-Gibson-Thompson equation and its relation to linear viscoelasticity},
{\jou  Appl.\ Math.\ Optim.}
\no{76}{641--655}{2017}

\bibitem{R8}
{\au R. Denk, F. Kammerlander},
{\ti Exponential stability for a coupled system of damped-undamped plate equations},
{\jou IMA\ J.\ Appl.\ Math.}
\no{83}{302--322}{2018}

\bibitem{DRO}
{\au A.D. Drozdov, V.B. Kolmanovskii},
{\bk Stability in viscoelasticity},
\eds{North-Holland}{Amsterdam}{1994}

\bibitem{R9}
{\au H.D. Fern\'andez Sare, R. Racke},
{\ti On the stability of damped Timoshenko systems: Cattaneo versus Fourier law},
{\jou Arch. Rational Mech. Anal.}
\no{194}{221--251}{2009}

\bibitem{GOB}
{\au G.C. Gorain, S.K. Bose},
{\ti Exact controllability and boundary stabilization of torsional vibrations of an internally damped flexible space structure},
{\jou J.\ Optim.\ Theory Appl.}
\no{99}{423--442}{1998}

\bibitem{R10}
{\au J. Hao, Z. Liu},
{\ti Stability of an abstract system of coupled hyperbolic and parabolic equations},
{\jou Z.\ Angew.\ Math.\ Phys.}
\no{64}{1145--1159}{2013}

\bibitem{HS}
{\au M.W. Hirsch, S. Smale},
{\bk Differential equations, dynamical systems and linear algebra},
\eds{Academic Press}{New York}{1974}

\bibitem{KLM}
{\au B. Kaltenbacher, I. Lasiecka, R. Marchand},
{\ti Wellposedness and exponential decay rates for the Moore-Gibson-Thompson
equation arising in high intensity ultrasound},
{\jou Control Cybernet.}
\no{40}{971--988}{2011}

\bibitem{KLP}
{\au B. Kaltenbacher, I. Lasiecka, M.K. Pospieszalska},
{\ti Well-posedness and exponential decay of the energy in the nonlinear Jordan-Moore-Gibson-Thompson
equation arising in high intensity ultrasound},
{\jou Math.\ Models Methods Appl.\ Sci.}
\no{22}{n.11, 1250035}{2012}

\bibitem{KN}
{\au B. Kaltenbacher, V. Nikoli\'c},
{\ti The Jordan-Moore-Gibson-Thompson equation:
well-posedness with quadratic gradient nonlinearity and singular limit
for vanishing relaxation time},
{\jou Math.\ Models Methods Appl.\ Sci.}
\no{29}{2523--2556}{2019}

\bibitem{R11}
{\au J.U. Kim},
{\ti On the energy decay of a linear thermoelastic bar and plate},
{\jou SIAM\ J.\ Math.\ Anal.}
\no{23}{889--899}{1992}

\bibitem{R12}
{\au Z. Liu, S. Zheng},
{\ti Exponential stability of the Kirchhoff plate with thermal or viscoelastic damping},
{\jou Quarterly Appl.\ Math.}
\no{55}{551--564}{1997}

\bibitem{R13}
{\au Z. Liu, S. Zheng},
{\bk Semigroups associated with dissipative systems},
\eds{Chapman \& Hall/CRC}{Boca Raton}{1999}

\bibitem{R14}
{\au J.E. Mu{\~n}oz Rivera, R. Racke},
{\ti Smoothing properties, decay, and global existence of solutions to nonlinear coupled systems of thermoelastic type},
{\jou SIAM J.\ Math.\ Anal.}
\no{26}{1547--1563}{1995}

\bibitem{R15}
{\au J.E. Mu\~noz Rivera, R. Racke},
{\ti Large solutions and smoothing properties for nonlinear thermoelastic systems},
{\jou J. \ Differential Equations}
\no{127}{454--483}{1996}

\bibitem{R16}
{\au J.E. Mu\~noz Rivera, R. Racke},
{\ti Mildly dissipative nonlinear Timoshenko systems--global existence and exponential stability},
{\jou J. \ Math. \ Anal. \ Appl.}
\no{276}{248--278}{2002}

\bibitem{R17}
{\au J.E. Mu\~noz Rivera, R. Racke},
{\ti Transmission problems in (thermo)viscoelasticity with Kelvin-Voigt damping: nonexponential, strong, and polynomial stability},
{\jou SIAM J.\ Math.\ Anal.}
\no{49}{3741--3765}{2017}

\bibitem{MDT}
{\au R. Marchand, T. McDevitt, R. Triggiani},
{\ti An abstract semigroup approach to the third-order Moore-Gibson-Thompson partial
differential equation arising in high-intensity ultrasound:\ structural decomposition,
spectral analysis, exponential stability},
{\jou Math.\ Methods Appl.\ Sci.}
\no{35}{1896--1929}{2012}

\bibitem{PAZ}
{\au A. Pazy},
{\bk Semigroups of linear operators and applications to partial differential equations},
\eds{Springer-Verlag}{New York}{1983}

\bibitem{MOG}
{\au F.K. Moore, W.E. Gibson},
{\ti Propagation of weak disturbances
in a gas subject to relaxation effects},
{\jou J.\ Aero/Space Sci.}
\no{27}{117--127}{1960}

\bibitem{R18}
{\au M.L. Santos, D.S. Almeida J\'unior, J.E. Mu{\~n}oz Rivera},
{\ti The stability number of the Timoshenko system with second sound},
{\jou J.\ Differential Equations}
\no{253}{2715--2733}{2012}

\bibitem{STO}
{\au Professor Stokes},
{\ti An examination of the possible effect of the radiation of heat on the propagation of sound},
{\jou Philos.\ Mag. Series 4}
\no{1}{305--317}{1851}

\bibitem{THO}
{\au P.A. Thompson},
{\bk Compressible-fluid dynamics},
\eds{McGraw-Hill}{New York}{1972}

\bibitem{R19}
{\au R. Triggiani},
{\ti Heat-viscoelastic plate interaction: analyticity, spectral analysis, exponential decay},
{\jou Evol.\ Equ.\ Control Theory}
\no{7}{153--182}{2018}

\bibitem{US}
{\au  D.J. Uherka, A.M. Sergott},
{\ti On the continuous dependence of the roots of a polynomial on its coefficients}
{\jou Amer.\ Math.\ Monthly}
\no{84}{368--370}{1977}

\end{thebibliography}
\end{document}